\newcommand\CA{{\mathscr A}} 
\newcommand\CB{{\mathscr B}}
\newcommand\CI{{\mathcal I}}  
\newcommand\CJ{{\mathscr J}}  
\newcommand\CK{{\mathscr K}} 
\newcommand\CM{{\mathcal M}}
\newcommand\FS{\mathscr S}
\newcommand\BBC{{\mathbb C}}
\newcommand\BBP{{\mathbb P}}
\newcommand\BBR{{\mathbb R}}
\newcommand\BBZ{{\mathbb Z}}
\newcommand\codim{\operatorname{codim}}
\newcommand\hgt{\operatorname{ht}}
\newcommand\id{{\operatorname{id}}}
\numberwithin{equation}{section}
\theoremstyle{plain}
\newtheorem{lemma}[equation]{Lemma}
\newtheorem{theorem}[equation]{Theorem}
\newtheorem{conjecture}[equation]{Conjecture}
\newtheorem{condition}[equation]{Condition}
\newtheorem{corollary}[equation]{Corollary}
\theoremstyle{definition}
\newtheorem{defn}[equation]{Definition}
\newtheorem{remark}[equation]{Remark}
\newtheorem{example}[equation]{Example}
\subjclass[2010]{Primary 20F55, 52C35, 14N20; Secondary 13N15}
\begin{document}

%%%%%%%%%%%%%%%%%%%%%%%%%%%%%%%%%%%%%%%%%%%%%%%%%%%%%%%%%%%%%%%%%%%%%%
%%%%%%%%%%%%% top matter stuff
%%%%%%%%%%%%%%%%%%%%%%%%%%%%%%%%%%%%%%%%%%%%%%%%%%%%%%%%%%%%%%%%%%%%%%
\title[The topology of arrangements of ideal type]
{The topology of arrangements of ideal type}

\author[N. Amend]{Nils Amend}
\address
{Institut f\"ur Algebra,~Zahlentheorie und Diskrete Mathematik,
Fakult\"at f\"ur Mathematik und Physik,
Gottfried Wilhelm Leibniz Universit\"at Hannover,
Welfengarten 1, D-30167 Hannover, Germany}
\email{amend@math.uni-hannover.de}

\author[G. R\"ohrle]{Gerhard R\"ohrle}
\address
{Fakult\"at f\"ur Mathematik,
Ruhr-Universit\"at Bochum,
D-44780 Bochum, Germany}
\email{gerhard.roehrle@rub.de}

\keywords{
Weyl arrangement, 
arrangement of ideal type, 
$K(\pi,1)$ arrangement}

\allowdisplaybreaks

\begin{abstract}
In 1962, Fadell and Neuwirth showed that the 
configuration space of the braid arrangement
is aspherical. Having generalized this to
many real reflection groups, Brieskorn conjectured this for 
all finite Coxeter groups. 
This in turn follows from Deligne's seminal work from 1972,
where he showed that the complexification of every real 
simplicial arrangement is a $K(\pi,1)$-arrangement.

In this paper we study
the $K(\pi,1)$-property for  
a certain class of subarrangements of Weyl arrangements,
the so called  
arrangements of ideal type $\CA_\CI$. These stem
from ideals $\CI$ in the set of positive 
roots of a reduced root system.
We show that the $K(\pi,1)$-property 
holds for all arrangements $\CA_\CI$ if the underlying Weyl group 
is classical and that it extends to most of the
$\CA_\CI$ if the underlying Weyl group is of exceptional type.
Conjecturally this holds for all $\CA_\CI$.
In general, the  $\CA_\CI$ are neither simplicial, 
nor is their complexification fiber-type.
\end{abstract}

\maketitle

%%%%%%%%%%%%%%%%%%%%%%%%%%%%%%%%%%%%%%%%%%%%%%%%%%%%%%%%%%%%%%%%%%%%%%
%%%%%%%%%%%%% article body...
%%%%%%%%%%%%%%%%%%%%%%%%%%%%%%%%%%%%%%%%%%%%%%%%%%%%%%%%%%%%%%%%%%%%%%

%%%%%%%%%%%%%%%%%%%%%%%%%%%%%%%%%%%%%%%%%%%%%%%%%%%%%%%%%%%%%%%%%%%%%%
%%%%%%%%%%%%% \S1 Introduction
%%%%%%%%%%%%%%%%%%%%%%%%%%%%%%%%%%%%%%%%%%%%%%%%%%%%%%%%%%%%%%%%%%%%%%
\section{Introduction and Results}
By fundamental work of 
Fadell-Neuwirth \cite{fadellneuwirth}, Brieskorn \cite{brieskorn:tresses} 
and Deligne \cite{deligne},
all Coxeter arrangements are 
$K(\pi,1)$-arrangements, i.e.~the complements of 
their complexifications are aspherical spaces.

While Coxeter arrangements are well studied, 
their subarrangements 
are considerably less well understood. 
In this paper we study the topology of the complements of 
certain arrangements which are 
associated with ideals in the set of positive roots of 
a reduced root system, so called 
\emph{arrangements of ideal type}  $\CA_\CI$,
Definition \ref{def:idealtype},
cf.~\cite[\S 11]{sommerstymoczko}.
We show that a combinatorial property introduced in 
\cite[Cond.\ 1.10]{roehrle:ideal} 
combined with Terao's fibration theorem \cite{terao:modular}
gives an inductive method to show that a large class of
(the complexifications of) 
the arrangements of ideal type $\CA_\CI$
are indeed $K(\pi,1)$-arrangements. 
This inductive technique was used in 
\cite{roehrle:ideal} 
to show that many of the arrangements $\CA_\CI$
are inductively free.
In general a subarrangement of a Weyl arrangement need not be $K(\pi,1)$,
e.g.~see Example \ref{ex:nonkpione}.

Let $\Phi$ be an irreducible, reduced root system
and let $\Phi^+$ be the set of positive roots 
with respect to some set of simple roots $\Pi$.
An \emph{(upper) order ideal}, 
or simply \emph{ideal} for short, 
of  $\Phi^+$, is a subset $\CI$  of $\Phi^+$ 
satisfying the following condition: 
if $\alpha \in \CI$ and $\beta \in \Phi^+$ so that 
$\alpha + \beta \in \Phi^+$, then $\alpha + \beta \in \CI$.
Recall the standard partial ordering 
$\preceq$ on $\Phi$: $\alpha \preceq \beta$
provided $\beta - \alpha$ is a $\BBZ_{\ge0}$-linear combination 
of positive roots, or $\beta = \alpha$. Then $\CI$ is an ideal in $\Phi^+$
if and only if whenever 
$\alpha \in \CI$ and $\beta \in \Phi^+$ so that 
$\alpha \preceq \beta$, then $\beta \in \CI$.

Let $\beta$ be in $\Phi^+$. Then $\beta = \sum_{\alpha \in \Pi} c_\alpha \alpha$
for $c_\alpha \in \BBZ_{\ge0}$.
The \emph{height} of $\beta$ is defined to be $\hgt(\beta) = \sum_{\alpha \in \Pi} c_\alpha$.
Let $\CI \subseteq \Phi^+$ be an ideal and let 
\[
\CI^c := \Phi^+ \setminus \CI
\]
be its complement in 
$\Phi^+$. 

Following \cite[\S 11]{sommerstymoczko}, 
we associate with an ideal $\CI$ in $\Phi^+$ the arrangement 
consisting of all hyperplanes with respect to the roots in $\CI^c$.
Let $\CA(\Phi)$ be the \emph{Weyl arrangement} of $\Phi$,
i.e., $\CA(\Phi) = \{ H_\alpha \mid \alpha \in \Phi^+\}$,
where $H_\alpha$ is the hyperplane in the Euclidean space
$V = \BBR \otimes \BBZ \Phi$ orthogonal to the root $\alpha$.

\begin{defn}[{\cite[\S 11]{sommerstymoczko}}]
\label{def:idealtype}
Let $\CI \subseteq \Phi^+$ be an ideal.
The \emph{arrangement of ideal type} associated with 
$\CI$ is the subarrangement $\CA_\CI$
of $\CA(\Phi)$ defined by 
\[
\CA_\CI := \{ H_\alpha \mid \alpha \in \CI^c\}.
\]
\end{defn}

It was shown by Sommers and Tymoczko \cite[Thm.\ 11.1]{sommerstymoczko}
that each $\CA_\CI$ is free
in case the root system is classical or of type $G_2$.
The general case was settled
in a uniform manner for all types 
by Abe, Barakat, Cuntz, Hoge, and Terao
in \cite[Thm.\ 1.1]{abeetall:weyl}. 
The non-zero exponents 
are given by the dual of the height partition of the roots in $\CI^c$.

Note that the complement $\CI^c$ forms a lower ideal in $\Phi^+$.
Thus in particular, in type $A_n$ the arrangements of ideal type $\CA_\CI$
are graphic arrangements corresponding to chordal 
graphs on $n+1$ vertices. The freeness of the latter
is due to Stanley, \cite[Prop.~2.8]{stanley:super}.

In \cite[Cor.\ 5.15]{barakatcuntz:indfree}, 
Barakat and Cuntz showed that every Weyl arrangement $\CA(\Phi)$  
is \emph{inductively free}.
It was shown in 
\cite{roehrle:ideal} that 
the free subarrangements $\CA_\CI$ of 
$\CA(\Phi)$ are also inductively free
with possible exceptions only in type $E_8$.
The remaining instances in type $E_8$ 
were settled only recently in 
\cite{cuntzroehrleschauenburg:ideal}.

Note that if $\CI = \varnothing$, then 
$\CA_\CI = \CA(\Phi)$ is just the reflection arrangement of $\Phi$ and so 
$\CA_\varnothing$ is $K(\pi,1)$ by Deligne's result.
So we may assume that $\CI \ne \varnothing$.

Next we describe a combinatorial condition for 
an ideal $\CI \subseteq \Phi^+$ from \cite{roehrle:ideal}.
Using induction and Terao's fibration theorem \cite{terao:modular}, 
it allows us to show that a large class of 
arrangements of ideal type consists of $K(\pi,1)$ arrangements.
Let $\Phi_0$ be a (standard) parabolic subsystem of $\Phi$ 
and let 
\[
\Phi^c_0 := \Phi^+ \setminus \Phi_0^+,
\]
the set of  positive roots in the ambient root system which do 
not lie in the smaller one.

\begin{condition} 
[{\cite[Cond.~1.10]{roehrle:ideal}}]
\label{cond:linear}
Let $\CI\ne \varnothing$ be an ideal in $\Phi^+$ and let
$\Phi_0$ be a maximal parabolic subsystem of $\Phi$
such that $\Phi^c_0 \cap \CI^c \ne \varnothing$.
Assume that 
firstly, $\Phi^c_0 \cap \CI^c$ is linearly ordered with respect to 
$\preceq$ so that there is 
a unique root of every occurring 
height in $\Phi^c_0 \cap \CI^c$, 
and secondly, 
for any $\alpha \ne \beta$ 
in $\Phi^c_0 \cap \CI^c$, 
there is a $\gamma \in \Phi_0^+$ so that 
$\alpha, \beta$, and $\gamma$
are linearly dependent.
\end{condition} 

The instances when this condition is satisfied 
have been determined in \cite{roehrle:ideal}.

Our first main result shows that  
Condition \ref{cond:linear}
entails the $K(\pi,1)$-property
for the 
associated arrangement 
of ideal type $\CA_\CI$.

\begin{theorem}
\label{thm:main}
Let $\CI \ne \varnothing$ be an ideal in $\Phi^+$ and let
$\Phi_0$ be a maximal parabolic subsystem of $\Phi$
such that Condition \ref{cond:linear} is satisfied.
Then $\CA_\CI$ is $K(\pi,1)$.

Specifically, this is the case if and only if one of the following holds:
\begin{itemize}
\item[(i)]
$\Phi$ is of type
$A_n$, $B_n$, $C_n$, for $n \ge 2$ or $G_2$ and
$\CI$ is any ideal in $\Phi^+$;
\item[(ii)] $\Phi$ is of type
$D_n$, for $n \ge 4$ and either $\CI^c$ does not 
contain both  $e_{1} \pm e_{n}$, or 
$\CI$ is generated by the root $e_{n-2} + e_{n-1}$;
\item[(iii)]
$\Phi$ is of type $F_4$, $E_6$, $E_7$, or $E_8$
and $\CI$ is as in \cite[\S 4]{roehrle:ideal}.
\end{itemize}
\end{theorem}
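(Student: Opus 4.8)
The plan is to establish the implication ``Condition~\ref{cond:linear} holds for some maximal parabolic $\Longrightarrow$ $\CA_\CI$ is $K(\pi,1)$'' by induction on $\rk\Phi$, using Terao's fibration theorem \cite{terao:modular} for the inductive step, with base cases the reflection arrangements (aspherical by Deligne \cite{deligne}) and the trivially aspherical arrangements of rank $\le 1$. The equivalence with (i)--(iii) is then read off from the determination, carried out in \cite{roehrle:ideal}, of exactly which pairs $(\Phi,\CI)$ admit such a maximal parabolic.

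For the inductive step, fix a maximal parabolic $\Phi_0=\langle\Pi\setminus\{\alpha_s\}\rangle$ with $\Phi^c_0\cap\CI^c\ne\varnothing$ for which the two requirements of Condition~\ref{cond:linear} hold, and put $\CI_0:=\CI\cap\Phi_0^+$. One checks at once that $\CI_0$ is an ideal in $\Phi_0^+$ with $\CI_0^c=\CI^c\cap\Phi_0^+$; hence $\CA_{\CI_0}$ is the subarrangement of $\CA_\CI$ consisting of the $H_\alpha$ with $\alpha\in\Phi_0$, while $\CA_\CI\setminus\CA_{\CI_0}=\{H_\alpha\mid\alpha\in\CI^c\cap\Phi^c_0\}$. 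Passing to complexifications, choose $0\ne v$ with $(v,\alpha)=0$ for all $\alpha\in\Phi_0$ (a fundamental coweight for $\alpha_s$); then $\alpha(v)=0\Leftrightarrow\alpha\in\Phi_0$, so $v$ lies on precisely the hyperplanes of $\CA_{\CI_0}$, and the projection along $\BBC v$ restricts to a surjection $M(\CA_\CI)\to M(\CA_{\CI_0})$ whose fibre over $y$ is the affine line $y+\BBC v$ with the points where it meets the $H_\alpha$, $\alpha\in\CI^c\cap\Phi^c_0$, removed (at most $|\CI^c\cap\Phi^c_0|$ points, and at least one since $\Phi^c_0\cap\CI^c\ne\varnothing$). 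By Terao's fibration theorem this projection is a locally trivial fibre bundle provided that for all $\alpha\ne\beta$ in $\CI^c\cap\Phi^c_0$ there is some $\gamma\in\CI_0^c$ with $H_\alpha\cap H_\beta\subseteq H_\gamma$, equivalently with $\alpha,\beta,\gamma$ linearly dependent.

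This is where Condition~\ref{cond:linear} enters. Its second requirement produces $\gamma\in\Phi_0^+$ with $\alpha,\beta,\gamma$ linearly dependent, and it remains to see $\gamma\notin\CI$, so that $\gamma\in\CI_0^c$. The roots $\alpha,\beta,\gamma$ are pairwise non-proportional (as $\gamma\in\Phi_0$ has $\alpha_s$-coefficient $0$ while $\alpha,\beta\in\Phi^c_0$ have it $\ge 1$, and $\alpha,\beta\in\Phi^+$ are distinct), so they lie in a rank-$2$ subsystem and one of them is a non-trivial integral combination of the other two; the $\alpha_s$-grading rules out $\gamma=\alpha+\beta$ and leaves only relations forcing $\gamma\preceq\alpha$ or $\gamma\preceq\beta$. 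Since $\CI$ is an upper order ideal with $\alpha,\beta\notin\CI$, this gives $\gamma\notin\CI$, as needed. Thus $M(\CA_\CI)\to M(\CA_{\CI_0})$ is a fibre bundle with aspherical fibre (a complex line with $\ge 1$ punctures), and the long exact homotopy sequence shows: if $\CA_{\CI_0}$ is $K(\pi,1)$, so is $\CA_\CI$. If $\CI_0=\varnothing$ this is Deligne's theorem; if $\Phi_0$ is reducible, $\CA_{\CI_0}$ splits as a product of ideal-type arrangements over the irreducible factors, and both $M(\,\cdot\,)$ and asphericity pass through products. Hence the induction closes once one knows that the class of pairs $(\Phi,\CI)$ admitting a maximal parabolic as in Condition~\ref{cond:linear} is stable under $\CI\mapsto\CI\cap\Phi_0^+$, read factorwise.

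That stability, together with the identification of this class with (i)--(iii), is the combinatorial heart of the matter and is supplied by the analysis of \cite{roehrle:ideal}: in types $A_n,B_n,C_n,G_2$ one may delete a suitable end node, where the roots of $\Phi^c_0$ automatically form a $\preceq$-chain of pairwise distinct heights and the linear-dependence requirement holds for every pair, so every ideal qualifies and the reduction stays in type (or breaks into type-$A$ factors); in type $D_n$ the obstruction is that $e_1-e_n$ and $e_1+e_n$ have the same height, so the first requirement of Condition~\ref{cond:linear} fails for every maximal parabolic whenever both lie in $\CI^c$ --- with the single exception of the ideal generated by $e_{n-2}+e_{n-1}$ --- which yields (ii); and for $F_4,E_6,E_7,E_8$ it is the explicit list of \cite[\S4]{roehrle:ideal}, yielding (iii). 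I expect the main obstacle to be exactly this: the ``only if'' direction (that outside the list no maximal parabolic can work) and the stability of the list under passage to a maximal parabolic; for the classical types this is a clean finite induction, but the $D_n$ exception must be isolated with care and the exceptional-type cases rely on the computer-assisted enumeration in \cite{roehrle:ideal}. By contrast, the one genuinely new observation --- that the second requirement of Condition~\ref{cond:linear} is precisely Terao's fibre-bundle criterion, with the base again an arrangement of ideal type, once one checks that the auxiliary root $\gamma$ avoids $\CI$ --- is short.
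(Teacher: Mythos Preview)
Your approach is essentially the paper's: induction on the rank, with the inductive step supplied by Terao's fibration theorem applied to the corank-one modular element coming from Condition~\ref{cond:linear}, reducing $\CA_\CI$ to $\CA_{\CI_0}$, and with the classification (i)--(iii) and the stability of Condition~\ref{cond:linear} under $\CI\mapsto\CI_0$ taken from \cite{roehrle:ideal}. The one place you go beyond the paper is in sketching \emph{why} the $\gamma\in\Phi_0^+$ produced by Condition~\ref{cond:linear} actually lies in $\CI_0^c$ (the paper simply invokes \cite[Lem.~3.4]{roehrle:ideal} for the modularity); your $\alpha_s$-grading argument is correct in outcome but the phrase ``leaves only relations forcing $\gamma\preceq\alpha$ or $\gamma\preceq\beta$'' hides a small computation: in the rank-$2$ subsystem spanned by $\alpha,\beta$, the $\alpha_s$-coefficient is a nonnegative linear functional vanishing on $\gamma$, which forces $\gamma$ to be one of the two simple roots of that subsystem, whence $\gamma\preceq\alpha$ and $\gamma\preceq\beta$ in the ambient order.
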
 

In addition we use Thom's first isotopy lemma
to construct explicit locally trivial fibrations in each of the
remaining 
instances  in type $D_n$ not covered in Theorem \ref{thm:main}(ii), 
i.e.~when $\Phi$ is of 
type $D_n$ and $\CI^c$ does 
contain both $e_{1} \pm e_{n}$.
Combined with 
Theorem \ref{thm:main},
this gives our second main result.

\begin{theorem}
\label{thm:classical}
For $\Phi$ of classical type and 
$\CI$ an ideal in $\Phi^+$, 
we have  that $\CA_\CI$ is $K(\pi,1)$.
\end{theorem}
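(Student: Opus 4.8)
The plan is to prove Theorem~\ref{thm:classical} by reducing everything to the two previously-established machines: Theorem~\ref{thm:main}, which disposes of \emph{most} ideals in classical type via Condition~\ref{cond:linear} and Terao's fibration theorem, and a separate argument using Thom's first isotopy lemma for the ideals in type $D_n$ that Theorem~\ref{thm:main} misses. First I would observe that for $\Phi$ of type $A_n$, $B_n$, or $C_n$, Theorem~\ref{thm:main}(i) already asserts that $\CA_\CI$ is $K(\pi,1)$ for \emph{every} ideal $\CI\subseteq\Phi^+$, so nothing remains to be done in those types. Hence the entire content of the theorem is concentrated in type $D_n$ with $n\ge 4$, and there the only ideals not covered by Theorem~\ref{thm:main}(ii) are precisely those for which $\CI^c$ contains both roots $e_1-e_n$ and $e_1+e_n$ while $\CI$ is not the ideal generated by $e_{n-2}+e_{n-1}$.

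For these exceptional $D_n$-ideals I would build an explicit locally trivial fibration of the complement $M(\CA_\CI) = V_{\BBC}\setminus\bigcup_{H\in\CA_\CI}H_{\BBC}$ over a lower-dimensional base whose fibre is again the complement of an arrangement of ideal type (in a root system of smaller rank) that has already been handled. The key point is that when $\CI^c$ contains both $e_1\pm e_n$, the variable $e_n$ interacts with $e_1$ in a controlled way, and one can set up a projection $M(\CA_\CI)\to B$ by forgetting the last coordinate; Condition~\ref{cond:linear} fails exactly because the relevant maximal parabolic slice is not linearly ordered, but the fibration still exists---it is just not the algebraic fibre-type fibration of Terao, so one must verify local triviality by a geometric argument rather than by the combinatorial criterion. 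This is where Thom's first isotopy lemma enters: one stratifies the total space and base compatibly (a Whitney stratification adapted to the arrangement), checks that the projection is proper on strata after a suitable restriction, and concludes that it is a locally trivial fibre bundle. Then the long exact homotopy sequence of the fibration, together with asphericity of base and fibre (the fibre being a product of a $K(\pi,1)$ arrangement complement of ideal type in lower rank with a punctured line or plane, and the base being a $K(\pi,1)$ by induction on $n$ starting from the $D_4$ base case), forces $\pi_i(M(\CA_\CI))=0$ for $i\ge 2$, i.e.\ $\CA_\CI$ is $K(\pi,1)$.

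The induction is organized on the rank $n$: the base cases ($n=4$, and the ideal $\langle e_{n-2}+e_{n-1}\rangle$ which is separately covered by Theorem~\ref{thm:main}(ii)) are checked directly, and the inductive step uses the fibration above, noting that the fibre and base arrangements are of strictly smaller rank in type $D$ (or are products involving classical types already covered by part~(i) of Theorem~\ref{thm:main}). Care is needed to confirm that after deleting the hyperplane $H_{e_n}$-type directions the residual arrangement is genuinely of ideal type for some ideal in $D_{n-1}$ (or a reducible root system), so that the inductive hypothesis applies; this bookkeeping on which roots survive the projection, and matching them against the definition of an ideal, is routine but must be done with the explicit root data of $D_n$.

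The main obstacle I anticipate is establishing local triviality of the projection in the $D_n$ case without the algebraic fibre-type structure: one has to produce a Whitney stratification of the pair for which the restricted projection satisfies the hypotheses of Thom's first isotopy lemma, and in particular control what happens over the locus where $e_1-e_n$ and $e_1+e_n$ become "aligned" with roots of the parabolic---precisely the configuration that breaks Condition~\ref{cond:linear}. Verifying that the stratification can be chosen so that the map is a submersion on each stratum and proper after restriction, and then identifying the resulting bundle fibre with a known $K(\pi,1)$ complement, is the technical heart of the argument; everything else is assembly of the homotopy exact sequence and an induction on rank.
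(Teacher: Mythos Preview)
Your reduction to type $D_n$ and the identification of the exceptional ideals (those with both $e_1\pm e_n\in\CI^c$ and $\CI\neq\langle e_{n-2}+e_{n-1}\rangle$) is correct, and you are right that Thom's first isotopy lemma is the tool. But the fibration you propose is not the one that works, and the specific map you suggest cannot be made to work.

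You propose the linear projection ``forgetting the last coordinate'' and expect the base and fibre to be complements of ideal-type arrangements in $D_{n-1}$ (or a reducible classical root system), so that induction on rank applies. This fails for exactly the reason Condition~\ref{cond:linear} fails: over a point $(y_1,\ldots,y_{n-1})$ the fibre in the $y_n$-line must omit both $y_n=y_1$ and $y_n=-y_1$ (since $e_1\pm e_n\in\CI^c$), and these two deleted points collide on the locus $y_1=0$. But $\ker(x_1)$ is \emph{not} a hyperplane of type $D_n$, so the discriminant of your projection is not contained in any subarrangement of $\CA(\Phi)$, let alone an ideal-type one. No Whitney stratification will rescue this: the fibre genuinely changes topological type across $y_1=0$, and that locus lies inside $\CM(\CA_\CI)$.

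The paper's fibration is the quadratic Brieskorn map
\[
f\colon (y_1,\ldots,y_n)\longmapsto (y_n^2-y_1^2,\ldots,y_n^2-y_{n-1}^2),
\]
whose base is the complement of a subarrangement $\CB$ of the arrangement $\CJ_{n-1}$ (Boolean plus braid, inside type $B_{n-1}$), not an ideal arrangement in $D_{n-1}$; the relevant $\CB$ is shown directly to be fibre-type (Lemmas~\ref{lem:type_a}--\ref{lem:type_c}), so no induction on $n$ is used here. The fibre is a connected Riemann surface with $2^{n-1}$ punctures (Corollary~\ref{cor:riemann}), hence $K(\pi,1)$ by uniformization---it is not a punctured line, nor a product with a lower-rank ideal complement. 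Thom's lemma is applied after compactifying via the graph embedding $\CM(\CA_\CI)\hookrightarrow\BBP^n\times\CM(\CB)$, with the stratification $\{C,\overline C\setminus C\}$ of the closure. The essential idea you are missing is that the $\pm$-symmetry of type $D$ forces a \emph{branched} (degree-$2^n$) map rather than a linear one; this is precisely Brieskorn's insight for $\CA(D_n)$ itself, and the paper extends it to the exceptional ideals by enlarging the base from $\CM(\CJ_{n-1})$ to $\CM(\CB)$.
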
 

In Table \ref{table:cond} we present the number of all 
arrangements of ideal type for each exceptional type in the first row.
In the second row, we list the number of 
all $\CA_\CI$ when $\CI$ satisfies 
Condition \ref{cond:linear} with respect to
a suitable parabolic subsystem, cf.~\cite[Table 1]{roehrle:ideal}. 
Thus in these instances 
$\CA_\CI$ is $K(\pi,1)$, by Theorem \ref{thm:main}(iii). 

\begin{table}[ht!b]
\renewcommand{\arraystretch}{1.6}
\begin{tabular}{r|ccccc}
\hline
$\Phi$ & $E_6$ & $E_7$ & $E_8$ & $F_4$ & $G_2$ \\ 
\hline\hline
all $\CA_\CI$ & 833 & 4160 & 25080 & 105 & 8\\
aspherical  $\CA_\CI$ & 771 & 3433 & 18902 & 85 & 8\\
\hline
\end{tabular}
\smallskip
\caption{aspherical $\CA_\CI$ for exceptional $\Phi$ from Theorem \ref{thm:main}. }
\label{table:cond}
\end{table}

It is evident from Table \ref{table:cond} that 
with the possible exception of a relatively small number of
cases in the exceptional types, all $\CA_\CI$ 
are $K(\pi,1)$.
The number of possible exceptions
in type $F_4$, $E_6$, $E_7$, $E_8$ are 
20, 62, 727, respectively 6178.
Thus, Theorems \ref{thm:main} and \ref{thm:classical}
give strong evidence for the following conjecture.

\begin{conjecture}
\label{conj:main}
Let $\Phi$ be a reduced root system with 
Weyl arrangement $\CA(\Phi)$.
Then any subarrangement of
ideal type $\CA_\CI$ of $\CA(\Phi)$ is a $K(\pi,1)$-arrangement.
\end{conjecture}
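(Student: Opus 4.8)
The plan is to invoke Theorem~\ref{thm:main} to settle all types except a residual family in type $D_n$, and to deal with that family by an inductive fibration argument based on Thom's first isotopy lemma. By Theorem~\ref{thm:main}, $\CA_\CI$ is $K(\pi,1)$ whenever $\Phi$ is of type $A_n$, $B_n$ or $C_n$, and also in type $D_n$ unless $\CI^c$ contains both of the roots $e_1\pm e_n$ (the ideal $\langle e_{n-2}+e_{n-1}\rangle$ being covered in addition). So I would fix $\Phi = D_n$ and an ideal $\varnothing\ne\CI$ with $\{e_1-e_n,e_1+e_n\}\subseteq\CI^c$, and prove that its complement $M(\CA_\CI)$ is aspherical by induction on $n$; the range $n\le 3$ carries no new content, as $D_2$ is reducible and $D_3$, being of type $A_3$, is fully covered by Theorem~\ref{thm:main}. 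The starting observation is a combinatorial reduction: since $\CI^c$ is a lower order ideal for $\preceq$, the hypothesis $e_1\pm e_n\in\CI^c$ forces $e_i-e_j\in\CI^c$ for all $1\le i<j\le n$ and $e_i+e_n\in\CI^c$ for all $1\le i<n$, that is, $\CI\subseteq\{e_i+e_j:1\le i<j\le n-1\}$. Hence, in coordinates $x_1,\dots,x_n$, the arrangement $\CA_\CI$ contains the braid arrangement $\{x_i=x_j\}$ together with all the hyperplanes $x_i+x_n=0$ ($i<n$), and the only members of $\CA(D_n)$ that may be missing from $\CA_\CI$ are some of the $x_i+x_j=0$ with $i<j<n$.

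Next I would study the linear projection $p\colon\BBC^n\to\BBC^{n-1}$ forgetting the last coordinate. It carries $M(\CA_\CI)$ onto $M(\CA_{\CI'})$, where $\CA_{\CI'}$ is again an arrangement of ideal type, now in a root system of type $D_{n-1}$, attached to $\CI$ regarded as an ideal there; by the inductive hypothesis (or by Theorem~\ref{thm:main}) $M(\CA_{\CI'})$ is aspherical. The fibre of $p$ over a point $b=(b_1,\dots,b_{n-1})$ is $\BBC\setminus\{\pm b_1,\dots,\pm b_{n-1}\}$, a complement of finitely many points in $\BBC$, hence aspherical. The difficulty is that the number of these points is not constant: it drops over the deeper flats of $M(\CA_{\CI'})$, precisely where two of the $b_i$ become opposite or one of them vanishes. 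This is exactly the failure of $\CA_\CI$ to be fibre type; in particular $p$ is not a fibre bundle and Terao's fibration theorem does not apply directly.

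To extract a fibration anyway, I would equip a suitable real form (or partial compactification) of $\BBC^n$ with the Whitney stratification refining the intersection lattice of $\CA(D_n)$ together with the coordinate hyperplanes, and $\BBC^{n-1}$ with the compatible stratification, so that $p$ becomes a proper stratified submersion. Thom's first isotopy lemma then shows $p$ is a locally trivial fibration over each stratum of the base. Over the open stratum, which is the complement of the Coxeter arrangement of type $B_{n-1}$ and therefore $K(\pi,1)$ by the classical results recalled in the introduction, the fibre is $\BBC$ with $2(n-1)$ points removed; over the lower strata it is $\BBC$ with correspondingly fewer points removed. One then has to reassemble $M(\CA_\CI)$ from the pieces $p^{-1}(\text{stratum})$: over the open stratum the total space of the fibration is aspherical by the homotopy long exact sequence, and the preimages of the deeper strata must be shown aspherical as well, using that after the explicit change of coordinates imposed by the collisions $b_i=-b_j$ and $b_i=0$ they split into products of complements of Coxeter arrangements and of subarrangements of Weyl arrangements of strictly smaller rank, which one handles by an auxiliary induction resting on Theorem~\ref{thm:main}.

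I expect this assembly step to be the real obstacle. One must choose the stratification and the accompanying product decompositions so carefully that every intermediate space appearing is again a deletion, restriction or cone of an arrangement already known to be $K(\pi,1)$ — which in general forces one to run the induction over a class of subarrangements somewhat larger than the $\CA_\CI$ themselves — and one must control how the locally trivial fibrations over the various strata are glued, so that no higher homotopy is created in the process. The remainder is formal: once, for each of the remaining ideals $\CI$, one has in this way produced a locally trivial fibration $F\to M(\CA_\CI)\to B$ with $B$ and $F$ aspherical, the homotopy long exact sequence gives $\pi_k(M(\CA_\CI))=0$ for all $k\ge 2$, so $\CA_\CI$ is $K(\pi,1)$. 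This closes the induction and, together with Theorem~\ref{thm:main}, proves the theorem.
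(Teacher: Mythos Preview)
The statement you are attempting to prove is stated in the paper as a \emph{conjecture}, not a theorem; the paper does not prove it. Theorem~\ref{thm:main} covers only a subset of the ideals in the exceptional types $F_4$, $E_6$, $E_7$, $E_8$ (see Table~\ref{table:cond}), and Theorem~\ref{thm:classical} completes only the classical types. Your proposal opens with the claim that Theorem~\ref{thm:main} ``settles all types except a residual family in type $D_n$'', which is false: there remain $20$, $62$, $727$ and $6178$ ideals in types $F_4$, $E_6$, $E_7$, $E_8$ respectively for which nothing in the paper, and nothing in your argument, establishes the $K(\pi,1)$-property. You never return to these, so your proposal cannot prove Conjecture~\ref{conj:main}.

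Restricting attention to type $D_n$ (i.e.\ to what the paper actually does prove, namely Theorem~\ref{thm:classical}), your approach is genuinely different from the paper's and contains a gap. You use the \emph{linear} projection $p$ forgetting $x_n$; as you correctly note, the fibre cardinality of $p$ is not constant on $M(\CA_{\CI'})$, so $p$ is not a fibration there, and you are forced into a stratified picture followed by an ``assembly step'' that you yourself flag as the real obstacle and do not carry out. Gluing aspherical pieces along aspherical strata does not in general produce an aspherical space, so this is a true gap, not just missing bookkeeping. The paper avoids this entirely by using instead the \emph{quadratic} Brieskorn map
\[
f(y_1,\dots,y_n)=(y_n^2-y_1^2,\dots,y_n^2-y_{n-1}^2),
\]
which sends $M(\CA_\CI)$ onto the complement of a fibre-type subarrangement $\CB\subseteq\CJ_{n-1}$ (one of $\CJ_{n-1}(r)$, $\CJ_{n-1}(s,t)$, $\CJ_{n-1}(r,s,t)$ according to the shape of $\CI$). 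After compactifying the source in $\BBP^n\times M(\CB)$, Thom's first isotopy lemma shows that $f$ is an honest locally trivial fibration over all of $M(\CB)$, with fibre a connected punctured Riemann surface; no stratified gluing is needed. Thus the paper's use of Thom is to certify a single global fibration, not to produce stratum-by-stratum pieces to be reassembled.
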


\begin{remark}
\label{rem:f4}
(i).
Let $\Phi$ be of type $F_4$ and let $\CI$ be the ideal 
generated by the root $0122$ of height $5$. Although 
$\CI$  is not covered by Theorem \ref{thm:main}, it
turns out that $\CA_\CI$ is simplicial (see \cite{cuntzeckenberger:groupoid}), 
and so $\CA_\CI$ is $K(\pi,1)$.

(ii).
Since the $\CA_\CI$ in type $E_6$ and type $E_7$ are localizations of 
arrangements of ideal type in type $E_8$, thanks to Remark \ref{rem:local},
the open cases in Conjecture \ref{conj:main} reduce to 
the ones in type $F_4$ and $E_8$.
\end{remark}

\begin{remark}
\label{rem:main}
It is worth emphasizing that Theorems \ref{thm:main}  and \ref{thm:classical} provide new examples for 
$K(\pi,1)$-arrangements that are neither fiber-type, nor simplicial.
For instance, one can check that 
none of the non-supersolvable arrangements $\CA_\CI$ in type $E_6$ that
are shown to be $K(\pi,1)$ by Theorem \ref{thm:main} are simplicial.
See also Example \ref{ex:kpione}.

Note that in type $D_n$ and type $B_n$, some of the arrangements $\CA_\CI$
that contain the full braid arrangement of $A_{n-1}$ as a subarrangement
are shown to be $K(\pi,1)$ in \cite[\S 5]{falkproudfoot}.
\end{remark}

For general information about arrangements, Weyl groups and root systems,  
we refer the reader to \cite{bourbaki:groupes} and 
\cite{orlikterao:arrangements}.

\section{Preliminaries}
\label{sect:prelims}

\subsection{Hyperplane arrangements}
\label{ssect:arrangements}
Let $V = \BBC^n$ 
be an $n$-dimensional complex vector space.
A \emph{hyperplane arrangement} is a pair
$(\CA, V)$, where $\CA$ is a finite collection of hyperplanes in $V$.
Usually, we simply write $\CA$ in place of $(\CA, V)$.

The \emph{lattice} $L(\CA)$ of $\CA$ is the set of subspaces of $V$ of
the form $H_1\cap \ldots \cap H_i$ where $\{ H_1, \ldots, H_i\}$ is a subset
of $\CA$. 
For $X \in L(\CA)$, we have two associated arrangements, 
firstly
$\CA_X :=\{H \in \CA \mid X \subseteq H\} \subseteq \CA$,
the \emph{localization of $\CA$ at $X$}, 
and secondly, 
the \emph{restriction of $\CA$ to $X$}, $(\CA^X,X)$, where 
$\CA^X := \{ X \cap H \mid H \in \CA \setminus \CA_X\}$.
The lattice $L(\CA)$ is a partially ordered set by reverse inclusion:
$X \le Y$ provided $Y \subseteq X$ for $X,Y \in L(\CA)$.

Throughout, we only consider arrangements $\CA$
such that $0 \in H$ for each $H$ in $\CA$.
These are called \emph{central}.
In that case the \emph{center} 
$T(\CA) := \cap_{H \in \CA} H$ of $\CA$ is the unique
maximal element in $L(\CA)$  with respect
to the partial order.
A \emph{rank} function on $L(\CA)$
is given by $r(X) := \codim_V(X)$.
The \emph{rank} of $\CA$ 
is defined as $r(\CA) := r(T(\CA))$.

\subsection{$K(\pi,1)$-arrangements}
\label{ssect:kpionearrangements}

A member $X$ in $L(\CA)$ is said to be \emph{modular}
provided $X + Y \in L(\CA)$ for every $Y \in L(\CA)$,
 \cite[Cor.\ 2.26]{orlikterao:arrangements}.
The following is an immediate consequence of Terao's work 
\cite{terao:modular} (see also \cite[\S 5.5]{orlikterao:arrangements}).
Indeed, $\CA$ is strictly linearly fibered (see Definition \ref{def:strictlinfib})
if and only if $L(\CA)$ admits 
a modular element of rank $r-1$, see \cite[Cor.~2.14]{terao:modular} 
(cf.~\cite[Cor.~5.112]{orlikterao:arrangements}).

\begin{lemma}
\label{lem:modular}
Let $\CA$ be a complex arrangement of rank $r$.
Suppose that $X \in L(\CA)$ is modular of rank $r-1$.
If $\CA_X$ is $K(\pi,1)$, then so is $\CA$.
\end{lemma}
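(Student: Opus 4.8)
The plan is to deduce this from Terao's fibration theorem, which relates modular elements of corank one to strictly linearly fibered arrangements. First I would invoke the fact recorded just above the statement: since $X \in L(\CA)$ is modular of rank $r-1$, the arrangement $\CA$ is strictly linearly fibered, meaning (Definition \ref{def:strictlinfib}, Terao \cite{terao:modular}, cf.\ \cite[Cor.~5.112]{orlikterao:arrangements}) that after a suitable choice of coordinates there is a projection $\pi\colon M(\CA) \to M(\CA'')$ onto the complement of a rank-$(r-1)$ arrangement $\CA''$, whose fibers are obtained from a fixed line by removing finitely many points. The total space here is the complement $M(\CA) = V \setminus \bigcup_{H \in \CA} H$, the base is $M(\CA'')$ where $\CA''$ is, up to the coordinate change, identifiable with the localization $\CA_X$ (its complement being homotopy equivalent to, indeed essentially, $M(\CA_X)$), and the fiber $F$ is $\BBC$ with finitely many points deleted.

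The second step is to feed this fibration into the long exact sequence of homotopy groups. The fiber $F$ is a wedge of circles (or $\BBC$, or $\BBC^\times$), hence aspherical: $\pi_i(F) = 0$ for $i \ge 2$. By hypothesis $M(\CA_X)$ is a $K(\pi,1)$, so $\pi_i(M(\CA_X)) = 0$ for $i \ge 2$. The long exact sequence
\[
\cdots \to \pi_i(F) \to \pi_i(M(\CA)) \to \pi_i(M(\CA_X)) \to \pi_{i-1}(F) \to \cdots
\]
then squeezes $\pi_i(M(\CA))$ between two vanishing groups for every $i \ge 2$, forcing $\pi_i(M(\CA)) = 0$. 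Since $M(\CA)$ is connected, this is exactly the assertion that $\CA$ is $K(\pi,1)$.

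The one point that needs care — and what I would treat as the main obstacle — is the precise identification of the base of the fibration with $\CA_X$ rather than with some restriction or quotient arrangement, and correspondingly checking that the fibration is genuinely locally trivial with the stated fiber. Terao's theorem produces a coordinate hyperplane, say $\ker x_n$ is not among the $H$'s, such that each $H \in \CA$ is defined by an equation linear in $x_n$; the modular element $X$ of rank $r-1$ is $\{x_n = 0\} \cap T(\CA)$-type data, and the arrangement governing the base is the one cut out on $\{x_n = \text{const}\}$, which one checks is combinatorially (indeed projectively) the localization $\CA_X$. Once that dictionary is in place, local triviality is part of the conclusion of \cite[Cor.~5.112]{orlikterao:arrangements}, and the homotopy argument above closes the proof. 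I would keep this step short by citing Terao's theorem for the existence and local triviality of the fibration and only spelling out the identification of the base.
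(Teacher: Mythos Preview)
Your proposal is correct and follows exactly the approach the paper has in mind: the paper does not actually supply a proof of this lemma but states it as ``an immediate consequence of Terao's work,'' pointing to the equivalence between the existence of a modular element of rank $r-1$ and $\CA$ being strictly linearly fibered (\cite[Cor.~2.14]{terao:modular}, \cite[Cor.~5.112]{orlikterao:arrangements}), after which the homotopy long exact sequence does the rest. Your write-up simply makes this explicit; the only quibble is the loose description of the base (the base is naturally the quotient arrangement $\CA_X$ in $V/X$, not literally a slice $\{x_n=\text{const}\}$, and $\ker x_n$ may well lie in $\CA\setminus\CA_X$), but this does not affect the argument.
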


\begin{remark}
\label{rem:local}
Thanks to an observation by Oka, 
if the complex arrangement $\CA$ is $K(\pi, 1)$, 
then so is every localization $\CA_X$ for $X$ in $L(\CA)$, 
e.g., see \cite[Lem.~1.1]{paris:deligne}.
\end{remark}

There is a standard construction for $K(\pi,1)$-arrangements using 
locally trivial fibrations
with $K(\pi,1)$-spaces as bases and fibers. The long exact sequence 
in homotopy theory then gives that $\CM(\CA)$ is a $K(\pi,1)$-space, 
e.g.~see \cite[Thm.~5.9]{orlikterao:arrangements}.
We recall two basic 
definitions due to
Falk and Randell \cite{falkrandell:fiber-type};
also see \cite[Defs.~5.10, 5.11]{orlikterao:arrangements}.

\begin{defn}
\label{def:strictlinfib}
An $n$-arrangement $\CA$ is called \emph{strictly linearly fibered}
if, after a suitable linear change of coordinates, the restriction of the projection of $\CM(\CA)$ to the first $n-1$ coordinates is a 
locally trivial fibration whose base space is the complement of an arrangement in $\BBC^{n-1}$, and whose fiber is the
complex line $\BBC$ with finitely many points removed.
\end{defn}

\begin{defn}
\label{def:fibertype}
\begin{itemize}
\item[(i)]
The $1$-arrangement $(\{0\}, \BBC)$ is \emph{fiber-type}.
\item[(ii)]
For $n \ge 2$, the $n$-arrangement $\CA$ is \emph{fiber-type} if $\CA$ is strictly linearly
fibered with base $\CM(\CB)$, where $\CB$ is an $(n - 1)$-arrangement of fiber-type.
\end{itemize}
\end{defn}

A repeated application of the homotopy exact sequence shows that 
a fiber-type arrangement $\CA$ is $K(\pi, 1)$, e.g.~see
\cite[Prop.~5.12]{orlikterao:arrangements}.

The following important tool for proving that a given map is a locally trivial fibration 
is due to Thom \cite{thom}, see also \cite{mather:strat}.

\begin{theorem}[Thom's first isotopy lemma]\label{thm:thom}
Let $M$, $P$ be smooth manifolds, $f\colon M \to P$ a smooth mapping and $S \subseteq M$ a closed subset which admits a Whitney stratification $\FS$.
Suppose $f\vert_S\colon S \to P$ is proper and $f\vert_X\colon X \to P$ is a submersion for each stratum $X \in \FS$. Then $f\vert_S\colon S \to P$ is 
a locally trivial fibration and in particular $f\vert_X\colon X \to P$ is a locally trivial fibration for all $X \in \FS$.
\end{theorem}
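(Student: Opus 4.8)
The plan is to prove Theorem~\ref{thm:thom} in two stages: a local statement near an arbitrary point $p \in P$, and then the passage to a genuine fibration over all of $P$. The key inputs are the Whitney conditions on $\FS$ and the properness of $f|_S$; the one genuinely hard point will be constructing, near $p$, a stratified vector field on $S$ that is $f$-related to a chosen vector field on $P$ \emph{and} whose flow is complete on $f^{-1}(U) \cap S$ for a suitable neighborhood $U$ of $p$. Once such controlled lifts exist, integrating them produces the local trivialization; properness of $f|_S$ is what guarantees the flows do not escape to infinity in finite time.

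Here is the step-by-step outline. First I would reduce to the case $P = \BBR^k$ (or a coordinate ball), working locally around $p \in P$, so that we have the $k$ coordinate vector fields $\partial/\partial t_1, \dots, \partial/\partial t_k$ on $P$. Second, I would invoke the standard fact (Mather, \emph{Notes on topological stability}, \S 9--10; cf.~\cite{mather:strat}) that, because each $f|_X$ is a submersion and $\FS$ is a Whitney stratification, one can choose \emph{controlled} tube data (a family of tubular neighborhoods of the strata with compatible projections $\pi_X$ and tubular functions $\rho_X$) and then, for each $j$, lift $\partial/\partial t_j$ to a \emph{controlled stratified vector field} $\xi_j$ on $S$: $\xi_j$ is tangent to every stratum, is $f$-related to $\partial/\partial t_j$, and is compatible with the control data (i.e.\ $d\pi_X(\xi_j) = \xi_j$ and $d\rho_X(\xi_j) = 0$ near each stratum). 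The Whitney~(b) condition is exactly what makes these compatibility equations solvable simultaneously across incident strata; this is the technical heart. Third, using properness of $f|_S$, I would argue that the flow $\varphi_j^{s}$ of $\xi_j$ is defined for all time on $f^{-1}(B)\cap S$ for a small closed ball $B \ni p$: an integral curve lies over a straight line in $P$, hence stays in the compact set $f^{-1}(\overline B)\cap S$ for bounded parameter, so by properness it cannot leave every compact set, and completeness follows. Fourth, I would assemble the $k$ commuting-in-the-appropriate-sense flows into a homeomorphism
\[
(f^{-1}(p)\cap S) \times B \;\longrightarrow\; f^{-1}(B)\cap S,
\qquad (x, t) \;\longmapsto\; \varphi_k^{t_k}\circ\cdots\circ\varphi_1^{t_1}(x),
\]
which by construction commutes with the projections to $B$ and respects each stratum (because each $\xi_j$ is tangent to the strata). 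This exhibits $f|_S$ as locally trivial over $B$, and the stratum-preserving property gives local triviality of each $f|_X$ simultaneously.

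The main obstacle, as indicated, is Step three--style care in the construction of the controlled lifts: one must choose the tube data shrinking fast enough near the frontier of each stratum so that the vector fields $\xi_j$ patch into a genuine (continuous, stratified) vector field on the closed set $S$, and this is where Whitney~(a) and~(b) are used in an essential way. I would not reproduce Mather's inductive construction over the strata (ordered by dimension) in detail; instead I would cite \cite{mather:strat} for the existence of controlled vector fields lifting a given vector field on the base through a stratified submersion, and then only spell out the two points that are specific to our situation: that $S$ is closed in $M$ (so the flows stay within $M$ and no boundary effects intervene) and that $f|_S$ proper forces completeness of the flows over a neighborhood of each point of $P$. The remaining assembly into a trivialization and the observation that each stratum is carried along — giving the "in particular" clause that every $f|_X$ is itself locally trivial — are then formal.
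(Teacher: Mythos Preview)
The paper does not prove Theorem~\ref{thm:thom}; it merely states Thom's first isotopy lemma as a known result, attributing it to Thom~\cite{thom} (see also Mather~\cite{mather:strat}), and then invokes it as a black box in \S\ref{s:typeD}. So there is no proof in the paper to compare your proposal against.

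That said, your outline is the standard approach to the isotopy lemma via controlled tube data and controlled stratified vector fields, essentially following Mather's treatment. It is correct in spirit and cites the right source for the hard technical step. One small caveat: the flows $\varphi_j^{t_j}$ of the lifts $\xi_j$ need not commute on the nose (the $\xi_j$ are only continuous stratified vector fields, not smooth, and even smoothly there is no reason for $[\xi_i,\xi_j]=0$), so the formula $(x,t)\mapsto \varphi_k^{t_k}\circ\cdots\circ\varphi_1^{t_1}(x)$ still gives a well-defined homeomorphism over $B$ commuting with projection, but you should not appeal to commutativity of the flows---rather, the map is a homeomorphism because each $\varphi_j^{t_j}$ is a stratum-preserving homeomorphism covering the translation by $t_j e_j$ in $B$, and the composite covers the translation by $t$. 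In any case, for the purposes of this paper no proof is expected: a citation suffices.
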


Let $\CB_n$ be the reflection arrangement of the hyperoctahedral group of type
$B_n$. 
In the following example we consider a 
fiber-type subarrangement $\CJ_n$ of 
$\CB_n$ which 
is used in \S \ref{s:typeD} in the proof of 
Theorem \ref{thm:classical}.

\begin{example} 
\label{ex:kpione2}
The subarrangement $\CJ_n$ of $\CB_n$ is obtained by 
removing the anti-diagonals from $\CB_n$.
So  $\CJ_n$ is the union of the rank $n$ Boolean 
arrangement and the braid arrangement $\CA_{n-1}$,
i.e.~$\CJ_n$ has defining polynomial 
\[
Q(\CJ_n) := \prod\limits_{i = 1}^n x_i\prod\limits_{1 \leq i < j \leq n}\left(x_i - x_j\right).
\]
One easily checks that $\CJ_n$ is fiber-type, e.g.\ by projecting
onto the first $n - 1$ coordinates and using induction on $n$.

We observe that the fiber-type arrangement $\CJ_n$ was already used by  
Brieskorn in his proof of the asphericity of the Coxeter arrangement in type $D_n$, see \cite{brieskorn:tresses}
and \cite[\S 5]{falkrandell:fiber-type}. 
Also note that $\CJ_n$ is the irreducible version of the braid arrangement of type $A_n$. 
It is isomorphic to the restriction $\CA(A_n)^X$, where $X = \ker(x_0)$: the hyperplane $\ker x_i$ in $\CJ_n$
then corresponds to the hyperplane $\ker(x_0 - x_i)$ in $\CA(A_n)$.
\end{example} 

The following related example shows that in general a subarrangement 
of a Coxeter arrangement need not be $K(\pi,1)$ (nor free).

\begin{example} 
\label{ex:nonkpione}
Let $\CB_n$ be as above and let $\CA_{n-1}$ be its subarrangement consisting of the braid 
arrangement of type $A_{n-1}$.
Let 
\[
\CK_n  := \CB_n \setminus \CA_{n-1}
\]
be the complement of $\CA_{n-1}$ in $\CB_n$.
As opposed to the subarrangement $\CJ_n$ of 
$\CB_n$ from Example \ref{ex:kpione2}, 
rather than 
removing the anti-diagonal hyperplanes from $\CB_n$, 
for $\CK_n$ we 
remove all the diagonals instead.
Thus $\CK_n$ has defining polynomial 
\[
Q(\CK_n) = \prod\limits_{i = 1}^n x_i\prod\limits_{1 \leq i < j \leq n}\left(x_i + x_j\right).
\]
We show by induction on $n$
that $\CK_n$ is not $K(\pi,1)$ for $n \ge 3$.
Owing to \cite[(3.12)]{falkrandell:homotopy},
$\CK_3$ is not $K(\pi,1)$.
Now suppose that $n > 3$ and that the statement holds for $\CK_{n-1}$.
Let $X := \cap_{i = 1}^{n-1} \ker x_i$. Then one readily checks 
that 
\[
(\CK_n)_X \cong \CK_{n-1}. 
\]
It follows from our induction hypothesis and Remark \ref{rem:local} that 
also $\CK_n$ fails to be $K(\pi,1)$.

In \cite[(3.12)]{falkrandell:homotopy}, Falk and Randell also observe that 
$\CK_3$ is not free. Accordingly, by the 
argument above along with 
\cite[Thm.\ 4.37]{orlikterao:arrangements} we see 
that $\CK_n$ is not free for all $n \ge 3$.

So while the construction of $\CK_n$ is quite similar to 
that of $\CJ_n$, its 
combinatorial, algebraic and topological properties differ sharply from
those of $\CJ_n$.
\end{example}

\section{Proof of Theorem \ref{thm:main}}
\label{s:idealtype}

Let $\Phi$
be a reduced root system 
of rank $n$ with 
Weyl group $W$ and \emph{reflection arrangement}
$\CA = \CA(\Phi) = \CA(W)$.
Let $\Phi^+$
be the set of positive roots with respect to 
some set of simple roots $\Pi$  of $\Phi$.
For $\Pi_0$ a proper subset of $\Pi$, the 
\emph{(standard parabolic) subsystem} of $\Phi$
generated by $\Pi_0$ is $\Phi_0 := \BBZ \Pi_0 \cap \Phi$,
cf.~\cite[Ch.\ VI \S 1.7]{bourbaki:groupes}. 
Define $\Phi_0^+ := \Phi_0 \cap \Phi^+$,
the set of positive roots of $\Phi_0$ with respect to $\Pi_0$.
If the rank of $\Phi_0$ is $n-1$, 
then $\Phi_0$ is said to be \emph{maximal}.

Set $X_0 := \cap_{\gamma \in \Phi_0^+} H_\gamma$.
Then $\CA(\Phi)_{X_0} = \CA(\Phi_0)$. Therefore, 
the reflection arrangement $\CA(W_{X_0})$
of the parabolic subgroup $W_{X_0}$
is just $\CA(\Phi_0)$,
i.e.\ $\Phi_0$ is the root system of $W_{X_0}$
(cf.~\cite[Thm.\ 6.27, Cor.\ 6.28]{orlikterao:arrangements}).

\begin{defn}
\label{def:I1}
Fix a standard parabolic subsystem $\Phi_0$ of $\Phi$.
For $\CI$ an ideal in $\Phi^+$, 
\[
\CI_0 := \CI \cap \Phi_0^+
\]
is an ideal in $\Phi_0^+$. Thus 
\[
\CA_{\CI_0} := \{H_\gamma \mid \gamma \in \CI_0^c = \Phi_0^+ \setminus \CI_0\}
\] 
is an arrangement of ideal type in 
$\CA(\Phi_0)$, the Weyl arrangement of $\Phi_0$. 
\end{defn}

Obviously, since 
$\CI_0^c  = \Phi_0^+ \setminus \CI_0 = \CI^c \cap \Phi_0^+ \subseteq \CI^c$, 
we may view $\CA_{\CI_0}$ as a
subarrangement of $\CA_\CI$
rather than as a subarrangement of $\CA(\Phi_0)$.
Note however, as such, 
$\CA_{\CI_0}$ is not of ideal type in $\CA$ in general,  
since $\CI_0$ need not be an ideal in $\Phi^+$.
We continue by recalling some basic facts from 
\cite{roehrle:ideal}.

\begin{lemma}
[{\cite[Lem.~3.1]{roehrle:ideal}}]
\label{lem:ideallocal}
Viewing $\CA_{\CI_0}$ as a subarrangement of $\CA_\CI$,
we have $\CA_{\CI_0} = (\CA_\CI)_{X_0}$.
\end{lemma}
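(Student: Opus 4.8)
The plan is to unravel the two definitions involved and verify the claimed equality of arrangements by showing that a hyperplane $H_\gamma$ of $\CA_\CI$ contains $X_0$ if and only if the root $\gamma$ lies in the (standard parabolic) subsystem $\Phi_0$. Recall $X_0 = \bigcap_{\delta \in \Phi_0^+} H_\delta$, so by the standard identification $\CA(\Phi)_{X_0} = \CA(\Phi_0)$ recalled just before Definition \ref{def:I1}, the localization of the full Weyl arrangement $\CA(\Phi)$ at $X_0$ consists precisely of those $H_\gamma$ with $\gamma \in \Phi_0^+$. Intersecting both sides of this with the subarrangement $\CA_\CI$ of $\CA(\Phi)$, which amounts to restricting the index set from $\Phi^+$ to $\CI^c$, yields that $(\CA_\CI)_{X_0} = \{ H_\gamma \mid \gamma \in \CI^c \cap \Phi_0^+ \}$.

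Next I would match this up with $\CA_{\CI_0}$. By Definition \ref{def:I1}, $\CA_{\CI_0} = \{ H_\gamma \mid \gamma \in \CI_0^c \}$ where $\CI_0 = \CI \cap \Phi_0^+$ and the complement is taken inside $\Phi_0^+$, so $\CI_0^c = \Phi_0^+ \setminus (\CI \cap \Phi_0^+) = \CI^c \cap \Phi_0^+$; this is exactly the identity already noted in the paragraph following Definition \ref{def:I1}. Hence the index sets coincide, and under the identification of $\CA_{\CI_0}$ with a subarrangement of $\CA_\CI$ (rather than of $\CA(\Phi_0)$) described there, we get $\CA_{\CI_0} = (\CA_\CI)_{X_0}$, which is the assertion.

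The only genuine point requiring care — and the main (mild) obstacle — is the localization step $\CA(\Phi)_{X_0} = \CA(\Phi_0)$: one needs that $X_0$ lies on $H_\gamma$ exactly when $\gamma \in \Phi_0$, equivalently that the subspace spanned by $\Phi_0^+$ has orthogonal complement meeting $H_\gamma$ in codimension zero precisely for $\gamma \in \Phi_0$. This is the content of \cite[Thm.~6.27, Cor.~6.28]{orlikterao:arrangements} (a root $\gamma$ orthogonal to $X_0 = (\operatorname{span}\Phi_0)^\perp$ must lie in $\operatorname{span}\Phi_0 \cap \Phi = \Phi_0$, using that $\Phi_0$ is closed as a parabolic subsystem), and it is already invoked in the excerpt, so I would simply cite it. Everything else is set-theoretic bookkeeping with the three index sets $\Phi^+$, $\Phi_0^+$, and $\CI^c$, and poses no difficulty.
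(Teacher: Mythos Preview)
Your argument is correct. The paper does not actually give a proof of this lemma; it merely cites it from \cite[Lem.~3.1]{roehrle:ideal}, so there is nothing to compare against in the present paper. Your proof is the natural one: combine the identification $\CA(\Phi)_{X_0} = \CA(\Phi_0)$ (already recalled in the text, with reference to \cite[Thm.~6.27, Cor.~6.28]{orlikterao:arrangements}) with the set-theoretic identity $\CI_0^c = \CI^c \cap \Phi_0^+$, and observe that localizing the subarrangement $\CA_\CI \subseteq \CA(\Phi)$ at $X_0$ simply intersects it with $\CA(\Phi)_{X_0}$. This is exactly the kind of direct unraveling one expects for such a statement, and it matches the spirit of the remark following Definition~\ref{def:I1}.
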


The next observation shows that  
Condition \ref{cond:linear} entails the presence 
of a modular element in $L(\CA_\CI)$ of rank 
$r(\CA_\CI)-1$.

\begin{lemma}
[{\cite[Lem.~3.4]{roehrle:ideal}}]
\label{lem:condition-modular}
If $\CI \subseteq \Phi^+$ and 
$\Phi_0$ satisfy Condition \ref{cond:linear},
then the center 
$Z:= T((\CA_\CI)_{X_0})$
of $(\CA_\CI)_{X_0}$
is modular of rank $r(\CA_\CI)-1$ in $L(\CA_\CI)$.
\end{lemma}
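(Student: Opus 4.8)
\emph{Identifying $Z$ and its rank.}
Since $\Phi_0$ is maximal, $\Pi_0=\Pi\setminus\{\alpha_s\}$ for a unique simple root $\alpha_s$, and $\Phi^c_0$ consists exactly of the positive roots in which $\alpha_s$ occurs with positive coefficient. By Lemma~\ref{lem:ideallocal}, $(\CA_\CI)_{X_0}=\CA_{\CI_0}$, so $Z=T(\CA_{\CI_0})=\bigcap_{\gamma\in\CI_0^c}H_\gamma$ and $r(Z)=\dim\operatorname{span}_{\BBR}\CI_0^c$. Because $\CI^c$ is a lower order ideal, every $\beta\in\CI^c$ is a nonnegative integral combination of simple roots, and each simple root occurring in it satisfies $\alpha\preceq\beta$ and hence lies in $\CI^c$; thus $\operatorname{span}_{\BBR}\CI^c=\operatorname{span}_{\BBR}(\Pi\cap\CI^c)$, so $r(\CA_\CI)=|\Pi\cap\CI^c|$, and the same argument inside $\Phi_0$ gives $r(Z)=|\Pi_0\cap\CI^c|$. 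Finally $\Phi^c_0\cap\CI^c\neq\varnothing$ by Condition~\ref{cond:linear}, and any $\beta$ in this set satisfies $\alpha_s\preceq\beta$ (subtract $\alpha_s$ and read off coefficients), so $\alpha_s\in\CI^c$ since $\CI^c$ is downward closed. Hence $|\Pi_0\cap\CI^c|=|\Pi\cap\CI^c|-1$, i.e.\ $r(Z)=r(\CA_\CI)-1$; moreover $\operatorname{span}_{\BBR}\CI^c=\operatorname{span}_{\BBR}\CI_0^c\oplus\BBR\alpha_s$, and $\alpha_s$ is the least element of the chain $\Phi^c_0\cap\CI^c$.

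\emph{A decomposition of the ``new'' roots.}
This is where the second half of Condition~\ref{cond:linear} is used. If $\beta\in\Phi^c_0\cap\CI^c$ with $\beta\neq\alpha_s$ and $\gamma\in\Phi_0^+$ is linearly dependent on $\alpha_s,\beta$, then writing $\gamma=a\alpha_s+b\beta$ and comparing $\alpha_s$-coefficients shows that $\beta-m\alpha_s$ (with $m$ the $\alpha_s$-coefficient of $\beta$) is a positive integral multiple $c\gamma$ of $\gamma$; thus $\beta=m\alpha_s+c\gamma$ with $m,c\in\BBZ_{\ge 1}$, so $\beta-\gamma=m\alpha_s+(c-1)\gamma\in\BBZ_{\ge 0}\Phi^+$, whence $\gamma\preceq\beta$ and therefore $\gamma\in\CI^c\cap\Phi_0^+=\CI_0^c$. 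The key fact needed below is the analogous statement for a pair of new roots: if $\mu\prec\nu$ lie in $\Phi^c_0\cap\CI^c$ and $\delta\in\Phi_0^+$ is linearly dependent on $\mu,\nu$, then $\delta\in\CI_0^c$. For this one passes to the rank-two subsystem $\Psi=\operatorname{span}_{\BBR}(\mu,\nu)\cap\Phi$ with positive system $\Psi\cap\Phi^+$: the $\alpha_s$-coefficient functional is nonnegative on $\Psi\cap\Phi^+$ and vanishes on $\delta$, which forces $\delta$ to be a simple root of $\Psi$; writing $\mu$ and $\nu$ in terms of the two simple roots of $\Psi$ then yields $\delta\preceq\nu$ (or $\delta\preceq\mu$), hence $\delta\in\CI^c$ and $\delta\in\CI_0^c$.

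\emph{Modularity.}
By the definition recalled before Lemma~\ref{lem:modular}, it remains to show $Z+Y\in L(\CA_\CI)$ for every $Y\in L(\CA_\CI)$. Put $\CI^c_Y=\{\alpha\in\CI^c: H_\alpha\supseteq Y\}$, so that $Y^\perp=\operatorname{span}_{\BBR}\CI^c_Y$ and $\CI^c_Y$ is exactly the set of roots of $\CI^c$ lying in $Y^\perp$. Since any root of $\CI^c$ lying in $\operatorname{span}_{\BBR}\CI_0^c\subseteq\operatorname{span}_{\BBR}\Phi_0$ automatically lies in $\CI_0^c$, one checks that $Z+Y\in L(\CA_\CI)$ is equivalent to $\operatorname{span}_{\BBR}\CI_0^c\cap\operatorname{span}_{\BBR}\CI^c_Y=\operatorname{span}_{\BBR}(\CI_0^c\cap\CI^c_Y)$, where only ``$\subseteq$'' needs proof. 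Split $\CI^c_Y=A\sqcup B$ with $A=\CI^c_Y\cap\CI_0^c$ and $B=\CI^c_Y\cap\Phi^c_0$; using $\operatorname{span}_{\BBR}\CI_0^c\subseteq\operatorname{span}_{\BBR}\Phi_0$ the inclusion reduces to $\operatorname{span}_{\BBR}B\cap\operatorname{span}_{\BBR}\Phi_0\subseteq\operatorname{span}_{\BBR}A$. If $|B|\le 1$ the left-hand side is $0$. If $|B|\ge 2$, note that $B\subseteq\Phi^c_0\cap\CI^c$ is a chain, so one can choose a basis $\mu_1\prec\cdots\prec\mu_t$ of $\operatorname{span}_{\BBR}B$ from $B$; the $\alpha_s$-coefficient functional is nonzero on $\operatorname{span}_{\BBR}B$, so $\operatorname{span}_{\BBR}B\cap\operatorname{span}_{\BBR}\Phi_0$ has dimension $t-1$ and is spanned by the roots $\delta_{1j}\in\Phi_0^+$ linearly dependent on $\mu_1,\mu_j$ for $2\le j\le t$, which are linearly independent since $\delta_{1j}$ involves $\mu_j$ but no $\mu_k$ with $k\neq 1,j$. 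By the previous paragraph each $\delta_{1j}$ lies in $\CI_0^c$; being also in $\operatorname{span}_{\BBR}B\subseteq Y^\perp$ and in $\CI^c$, it lies in $\CI^c_Y$, hence in $\CI^c_Y\cap\CI_0^c=A$. Thus $\operatorname{span}_{\BBR}B\cap\operatorname{span}_{\BBR}\Phi_0\subseteq\operatorname{span}_{\BBR}A$, and $Z$ is modular.

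\emph{Main obstacle.}
The delicate point is the ``pair'' statement in the second paragraph — that a root $\delta\in\Phi_0^+$ linearly dependent on two distinct new roots $\mu\prec\nu$ again lies in $\CI^c$ — since this is the only place where one must work inside the rank-two subsystem $\operatorname{span}_{\BBR}(\mu,\nu)\cap\Phi$, and it is precisely where both parts of Condition~\ref{cond:linear} are needed at once (linear orderedness to control $\alpha_s$-coefficients, and the dependence relation itself). The remaining work — the reduction of $Z+Y\in L(\CA_\CI)$ to an intersection-of-spans identity and the case analysis on $|B|$ in the modularity step — is routine linear algebra together with bookkeeping about lower ideals.
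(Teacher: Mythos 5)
Your proof is correct, but note that the paper itself contains no proof of this lemma to compare against: it is quoted verbatim from \cite[Lem.~3.4]{roehrle:ideal}, so what you have written is a self-contained replacement for that citation rather than a variant of an argument in the text. Your route is to verify modularity directly from the definition: identify $Z^\perp=\operatorname{span}_\BBR\CI_0^c$, reduce $Z+Y\in L(\CA_\CI)$ to the span identity $\operatorname{span}_\BBR\CI_0^c\cap\operatorname{span}_\BBR\CI^c_Y=\operatorname{span}_\BBR(\CI_0^c\cap\CI^c_Y)$, and then use the dependence part of Condition~\ref{cond:linear} to produce the roots $\delta_{1j}\in\Phi_0^+$ spanning $\operatorname{span}_\BBR B\cap\operatorname{span}_\BBR\Phi_0$, together with the rank-two subsystem argument showing each $\delta_{1j}$ lies in $\CI^c$ (hence in $A$). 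I checked the individual steps -- the rank count via supports of roots in the lower ideal $\CI^c$ and the fact that $\alpha_s\in\CI^c$, the observation that a root of $\CI^c$ lying in $\operatorname{span}_\BBR\Phi_0$ is automatically in $\CI_0^c$, the equivalence with the span identity, the dimension count $t-1$, the independence of the $\delta_{1j}$, and the ``pair'' lemma (a non-simple positive root of $\Psi$ with vanishing $\alpha_s$-coefficient would force the coefficient functional to vanish on all of $\Psi$, contradicting $\lambda(\mu)>0$) -- and they all hold. In effect your pair statement is exactly the modular-coatom input one expects here, and the remaining work is linear algebra, as you say.

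Two minor points. First, in your ``Claim A'' paragraph the assertion that $\beta-m\alpha_s$ is an \emph{integral} multiple of $\gamma$ does not follow merely from comparing $\alpha_s$-coefficients; that only gives proportionality with a positive ratio, and integrality needs, e.g., primitivity of roots in the root lattice. This is harmless because Claim A is never used: your pair statement with $\mu=\alpha_s$ subsumes it, so the paragraph can simply be dropped or folded into the pair argument. Second, your closing remark that the pair statement is ``precisely where both parts of Condition~\ref{cond:linear} are needed at once'' is inaccurate for your own proof: nowhere do you use the linear ordering of $\Phi_0^c\cap\CI^c$ or the uniqueness of heights, only $\Phi_0^c\cap\CI^c\neq\varnothing$ and the pairwise dependence condition (even the choice of a $\preceq$-ordered basis of $\operatorname{span}_\BBR B$ plays no role). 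This means you prove a formally stronger statement, which is fine, but the bookkeeping comment should be corrected so readers are not misled about which hypotheses enter where.
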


Observe that $X_0$ itself need not belong to $L(\CA_\CI)$, 
e.g.~see \cite[Ex.~3.3]{roehrle:ideal}.

Our next result
shows that Condition \ref{cond:linear}
allows us to derive
the $K(\pi,1)$ property for $\CA_\CI$ from that of 
$\CA_{\CI_0}$. 
It is just a consequence of Lemma \ref{lem:modular}.

\begin{corollary}
\label{cor:I1I}
Let $\CI$ be an ideal in $\Phi^+$ and let
$\Phi_0$ be a maximal parabolic subsystem of $\Phi$
such that either $\Phi^c_0 \cap \CI^c = \varnothing$
or else Condition \ref{cond:linear} is satisfied.
Then $\CA_{\CI_0}$ is $K(\pi,1)$ 
if and only if $\CA_\CI$ is $K(\pi,1)$.
\end{corollary}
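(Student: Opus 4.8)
The plan is to deduce Corollary \ref{cor:I1I} directly from Lemma \ref{lem:modular} together with the structural facts already assembled, so that essentially no new work is needed. First I would dispose of the trivial case: if $\Phi^c_0 \cap \CI^c = \varnothing$, then $\CI^c = \CI_0^c$, hence $\CA_\CI = \CA_{\CI_0}$ as arrangements (viewing the latter inside $\CA_\CI$ via the inclusion $\CI_0^c = \CI^c \cap \Phi_0^+ \subseteq \CI^c$ noted after Definition \ref{def:I1}), and the equivalence is a tautology.

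So I may assume Condition \ref{cond:linear} holds. By Lemma \ref{lem:condition-modular}, the center $Z := T((\CA_\CI)_{X_0})$ is modular of rank $r(\CA_\CI) - 1$ in $L(\CA_\CI)$. Now I need to identify the localization of $\CA_\CI$ at this modular flat $Z$ with $\CA_{\CI_0}$. The point is that $Z \in L(\CA_\CI)$ and $Z$ is by construction the intersection of all hyperplanes of $(\CA_\CI)_{X_0}$; since $(\CA_\CI)_{X_0}$ is a localization it is already closed under ``adding all hyperplanes through its center,'' so $(\CA_\CI)_Z = (\CA_\CI)_{X_0}$. Combining with Lemma \ref{lem:ideallocal}, which gives $(\CA_\CI)_{X_0} = \CA_{\CI_0}$, we get $(\CA_\CI)_Z = \CA_{\CI_0}$.

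With this identification in hand, the corollary follows in both directions. If $\CA_{\CI_0}$ is $K(\pi,1)$, then $(\CA_\CI)_Z$ is $K(\pi,1)$ with $Z$ modular of rank $r(\CA_\CI)-1$, so Lemma \ref{lem:modular} yields that $\CA_\CI$ is $K(\pi,1)$. Conversely, if $\CA_\CI$ is $K(\pi,1)$, then by Remark \ref{rem:local} (Oka's observation) every localization is $K(\pi,1)$, in particular $(\CA_\CI)_Z = \CA_{\CI_0}$ is $K(\pi,1)$.

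I expect the only genuine point requiring care to be the equality $(\CA_\CI)_Z = (\CA_\CI)_{X_0}$, i.e.\ checking that passing from the flat $X_0$ (which, as remarked after Lemma \ref{lem:condition-modular}, need not itself lie in $L(\CA_\CI)$) to the honest lattice element $Z = T((\CA_\CI)_{X_0})$ does not change the set of hyperplanes one localizes at. This is immediate from the general fact that for any arrangement $\CB$ and any subspace $Y$, writing $\CB_Y := \{H \in \CB \mid Y \subseteq H\}$ and $Z := \cap_{H \in \CB_Y} H \in L(\CB)$, one has $Y \subseteq Z$ and hence $\CB_Z \supseteq \CB_Y$, while every $H \supseteq Z$ in particular contains $Y$ so $\CB_Z \subseteq \CB_Y$; thus $\CB_Z = \CB_Y$. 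Everything else is a direct invocation of Lemmas \ref{lem:modular}, \ref{lem:ideallocal}, \ref{lem:condition-modular} and Remark \ref{rem:local}.
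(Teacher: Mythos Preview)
Your proof is correct and follows essentially the same route as the paper's: invoke Lemmas \ref{lem:ideallocal} and \ref{lem:condition-modular} to identify $\CA_{\CI_0} = (\CA_\CI)_{X_0} = (\CA_\CI)_Z$, then apply Lemma \ref{lem:modular} for the forward direction and Remark \ref{rem:local} for the converse. The only cosmetic difference is in the degenerate case $\Phi_0^c \cap \CI^c = \varnothing$: the paper phrases $\CA_\CI$ as the product of $\CA_{\CI_0}$ with an empty $1$-arrangement, whereas you identify the two arrangements directly---both are fine. (One tiny slip: in your last paragraph, the inclusion $\CB_Z \supseteq \CB_Y$ does not follow from $Y \subseteq Z$ but rather from the fact that $Z$ is the intersection of the members of $\CB_Y$; the conclusion $\CB_Z = \CB_Y$ is of course correct.)
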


\begin{proof}
If $\Phi^c_0 \cap \CI^c = \varnothing$, 
then $\CA_\CI$ is the product of the empty 1-dimensional arrangement and 
$\CA_{\CI_0}$, and so the result is clear.
Else, $\CA_{\CI_0} = (\CA_\CI)_{X_0} = (\CA_\CI)_Z$, 
by Lemmas \ref{lem:ideallocal} and \ref{lem:condition-modular}.
Therefore, the forward implication follows from Lemmas \ref{lem:modular} and
\ref{lem:condition-modular}, while 
the reverse implication is clear by 
Remark \ref{rem:local}.
\end{proof}

We note that modular elements of corank 1 were constructed in 
\cite[Lem.~5.4]{falkproudfoot}
for certain subarrangements of 
the reflection arrangement $\CB_n$
of the hyperoctahedral group of type
$B_n$ that contain the full braid arrangement $\CA_{n-1}$ of type $A_{n-1}$. 

\begin{remark}
\label{rem:typeD}
Let $\Phi$ be of type $D_n$, for $n \ge 4$ and
let $\Phi_0$ be the standard subsystem of $\Phi$ of type $D_{n-1}$.
Here and in \S \ref{s:typeD} we use the notation for the positive roots
from \cite[\S 4.8, Planche IV]{bourbaki:groupes}.
Then $\Phi^c_0 = \{ e_1 \pm e_j \mid 2 \le j \le n\}$.
Note that $\Phi^c_0$ is not linearly ordered by 
$\preceq$, for $e_1 \pm e_n$ both have height $n-1$.

Suppose that $\CI \ne \varnothing$ fails to satisfy Condition \ref{cond:linear}
(with respect to our fixed $\Phi_0$).
This is precisely the case when both $e_1 \pm e_n$ belong to $\CI^c$.
Then $\CI$ consists of roots from $\Phi^+$ each of which 
admits the root $e_{n-2} + e_{n-1}$ of height $3$ as a summand.
Otherwise, at least 
one of $e_1 \pm e_n$ must belong to  $\CI$, as $\CI$ is an ideal in $\Phi^+$.
This contradicts the assumption on $\CI$.
In turn this implies that 
if $\CI_0 = \Phi_0 \cap \CI$ is non-empty and 
fails to satisfy Condition \ref{cond:linear}
with respect to the maximal rank subsystem of $\Phi_0$ of type $D_{n-2}$,
then $\CI$ fails to satisfy Condition \ref{cond:linear}
with respect to $\Phi_0$.
For, if each root in $\CI_0$ 
admits the root $e_{n-3} + e_{n-2}$ as a summand, 
then necessarily each root in $\CI$
has $e_{n-2} + e_{n-1}$ as a summand.

We conclude that if 
$\CI$ satisfies Condition \ref{cond:linear}
with respect to $\Phi_0$, then
$\CI_0$ satisfies Condition \ref{cond:linear}
with respect to the subsystem of $\Phi_0$ of type $D_{n-2}$. 
\end{remark}

\begin{proof}[Proof of Theorem \ref{thm:main}]
(i).
For $\Phi$ of type $A_n$, $B_n$, or $C_n$ for $n \ge 2$, 
it follows from \cite[\S 7]{sommerstymoczko} that 
for $\Phi_0$ the canonical maximal rank subsystem of type
$A_{n-1}$, $B_{n-1}$, or $C_{n-1}$, respectively, 
each $\CI$ satisfies 
Condition \ref{cond:linear}, because irrespective of $\CI$, in each case
$\Phi^c_0$ is linearly ordered by $\preceq$. 
So the result follows in this instance
from induction on the rank, Corollary~\ref{cor:I1I},
and the fact that central rank $2$-arrangements are $K(\pi,1)$,
cf.~\cite[Prop.\ 5.6]{orlikterao:arrangements}.
The last result also implies that 
for $\Phi$ of type $G_2$
each arrangement of ideal type is $K(\pi,1)$.
The very same inductive argument shows that
in all these cases each $\CA_\CI$ is actually supersolvable, see
\cite[Thm.\ 1.5]{roehrle:ideal}; see also \cite[Thms.~6.6, 7.1]{hultman:koszul}
where this is proved by different means.

(ii).
Now let $\Phi$ be of type $D_n$, for $n \ge 4$ and
let $\Phi_0$ be the standard subsystem of $\Phi$ of type $D_{n-1}$.
We argue by induction on $n$.
For $n = 4$,  the result follows from \cite[Lem.\ 6.1]{roehrle:ideal}.
Indeed, each $\CA_\CI$ which satisfies the hypothesis of the theorem is 
already supersolvable.

Now suppose that $n \ge 5$ and that the result holds for
root systems of type $D$ of smaller rank.
If $\CI_0 = \Phi_0 \cap \CI = \varnothing$, 
then $\CA_{\CI_0} = \CA(D_{n-1})$. Being simplicial, the latter is  $K(\pi,1)$.
It follows from Corollary~\ref{cor:I1I} that also $\CA_\CI$ is  $K(\pi,1)$.

Now suppose that $\CI_0 \ne \varnothing$.
By Remark \ref{rem:typeD}, 
$\CI_0$ satisfies Condition \ref{cond:linear} and 
so by induction, 
$\CA_{\CI_0}$ is $K(\pi,1)$. Using 
Corollary~\ref{cor:I1I} again, 
we conclude that $\CA_\CI$ is also $K(\pi,1)$,
as desired.

Now let $\CI$ be the ideal in $\Phi$ 
which is generated by 
$e_{n-2} + e_{n-1}$. Then one easily checks that 
$\CI$ satisfies Condition \ref{cond:linear}
with respect to either one of the two subsystems of type 
$A_{n-1}$, see \cite[Ex.\ 3.9]{roehrle:ideal}. 
So it follows from part (i) and Corollary~\ref{cor:I1I} that 
$\CA_\CI$ is also $K(\pi,1)$ in this instance.

(iii). 
Now suppose that $\Phi$ is of type $F_4$, $E_6$, $E_7$, or $E_8$.
All instances when $\CI$ satisfies Condition \ref{cond:linear}
with respect to a suitably chosen maximal rank subsystem $\Phi_0$ 
are discussed in detail in \cite[\S 4]{roehrle:ideal}.
Perusing the arguments and in particular the 
data in Tables 6 - 9 in \cite[\S 4]{roehrle:ideal}, 
one checks that in each instance either  
$\CI_0 = \varnothing$, or else 
$\CI_0 \ne \varnothing$ satisfies Condition \ref{cond:linear} 
with respect to $\Phi_0^+$. 
In the first instance we have
$\CA_{\CI_0} = \CA(\Phi_0)$ which is simplicial, and so it is $K(\pi,1)$.
In the second instance 
$\CA_{\CI_0}$ is $K(\pi,1)$ by induction. 
In both cases 
it follows from Corollary~\ref{cor:I1I} that 
also $\CA_\CI$ 
is $K(\pi,1)$, as claimed.
\end{proof}

We illustrate the inductive arguments in the proof of 
Theorem \ref{thm:main}(iii) 
in the following examples.

\begin{example}
\label{ex:kpione}
(a).
Let $\Phi$ be of type $E_6$ and let $\CI$ be the ideal generated by
the root $\stackrel{00111}{_{0}}$ of height $3$.
Then according to the last entry for $E_6$ in \cite[Table 6]{roehrle:ideal},
$\CI$ together with the subsytem $\Phi_0$ of type $D_5$ satisfy 
Condition \ref{cond:linear}.
Since $\CI_0 = \varnothing$,
$\CA_{\CI_0} = \CA(\Phi_0)$ is the full reflection arrangement of type $D_5$
which is $K(\pi,1)$.
Thus so is $\CA_\CI$, by Corollary~\ref{cor:I1I}.

(b). 
Next consider $\Phi$ of type $E_7$ and let $\CI$ be the ideal generated by
the root $\stackrel{001110}{_{0\ }}$ of height $3$.
Then according to the next to last entry for $E_7$ in \cite[Table 6]{roehrle:ideal},
$\CI$ together with the subsystem $\Phi_0$ of type $E_6$ satisfy 
Condition \ref{cond:linear}.
Now $\CI_0$ is just the ideal in $E_6$  considered in part (a).
Consequently, 
$\CA_{\CI_0}$ is $K(\pi,1)$.
But then so is $\CA_\CI$, again by Corollary~\ref{cor:I1I}.

(c). 
Finally, let $\Phi$ be of type $E_8$ and let $\CI$ be the ideal generated by
the root $\stackrel{0011100}{_{0\ \ }}$ of height $3$.
Thanks to the data in the fifth row for $E_8$ in \cite[Table 6]{roehrle:ideal},
$\CI$ together with the subsytem $\Phi_0$ of type $E_7$ satisfy 
Condition \ref{cond:linear}.
As $\CI_0$ is the ideal in $E_7$ considered in part (b),
we have that $\CA_{\CI_0}$ is $K(\pi,1)$ and so is
$\CA_\CI$, thanks to Corollary~\ref{cor:I1I}.
\end{example}

Note that none of the three arrangements of ideal type $\CA_\CI$ considered in Example
\ref{ex:kpione} is supersolvable (cf.~\cite[Lem.~6.2]{hultman:koszul}) and none of them is simplicial.

\section{Proof of Theorem \ref{thm:classical}}
\label{s:typeD}

Thanks to Theorem \ref{thm:main}, 
Theorem \ref{thm:classical} follows once the outstanding
instances in Type $D_n$ not covered in Theorem \ref{thm:main}(ii) are resolved.
Accordingly, these are the instances when
$\CI$ consists of roots from $\Phi^+$ each of which 
admits the root $e_{n-2} + e_{n-1}$ of height $3$ as a summand,
by Remark \ref{rem:typeD}.
In addition, by the proof of Theorem \ref{thm:main}, 
we need not consider the case when 
$\CI$ is the ideal in $\Phi$ which is generated by 
$e_{n-2} + e_{n-1}$.
We list the different cases we need to consider below.
We distinguish three different types of such ideals 
$\CI$ according to their 
generators. In the first two instances, each $\CI$ is 
generated by just a single root and by two in the third case:\\

\begin{itemize}
\item[(I)] $0\ldots01\ldots1\!\stackrel{1}{_{1}} \ = e_r+e_{n-1}$ for $1 \le r < n-2$. 
Here $r$ is the first position with $1$ as coefficient.

\item[(II)] $0\ldots01\ldots12\ldots12\!\stackrel{1}{_{1}} \ =  e_s+e_t$, where $1 \le  s < t < n-1$. 
Here $s$ is the first position with a coefficient $1$ and $t$ is the first position labeled with $2$.

\item[(III)] $0\ldots01\ldots1\!\stackrel{1}{_{1}} \ = e_r+e_{n-1}$ for $1 \le r < n-2$ and 
$0\ldots01\ldots12\ldots12\!\stackrel{1}{_{1}} \ =  e_s+e_t$, where $1 \le  s < t < n-1$ and $r < s$.
Note that the two roots are not comparable,  since $r < s$.
\end{itemize}

In the following we give explicit locally trivial fibrations of the complements in each of the three cases above.
First, we consider spaces that are going to serve as our bases for the locally trivial fibrations
in these three instances. 
Recall the fiber-type subarrangement $\CJ_n$ of $\CB_n$ from  
Example \ref{ex:kpione2}. In the following three lemmas, we exhibit three classes of subarrangements of 
$\CJ_n$ that are still fiber-type.  

\begin{lemma}\label{lem:type_a}
For $1 \leq r < n - 1$ fixed, the $n$-arrangement
$$
\CJ_n(r) := \CJ_n\setminus\left\{\ker\left(x_i - x_j\right)\ \middle|\ 1 \leq i \leq r < j \leq n\right\}
$$
is fiber-type.
\begin{proof}
We distinguish two cases: First, assume $r = 1$. Then the projection $\pi\colon\BBC^n\to\BBC^{n-1}$,
$\left(z_1, \ldots, z_n\right) \mapsto \left(z_2, \ldots, z_n\right)$ induces a locally trivial fibration
$\widetilde{\pi}\colon\CM\left(\CJ_n(r)\right)\to\CM\left(\CJ_{n-1}\right)$ with fiber the complex plane with
one point removed.

Now assume that $r > 1$. Then we have $\CJ_n(r) = \CJ_r \times \CJ_{n-r}$.

Thus in both cases, $\CJ_n(r)$ is fiber-type.
\end{proof}
\end{lemma}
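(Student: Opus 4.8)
The plan is to treat the cases $r=1$ and $r>1$ separately, in each one recognizing $\CJ_n(r)$ either as a strictly linearly fibered arrangement over a fiber-type base or as a product of strictly smaller copies of the $\CJ$'s, and then to quote Example \ref{ex:kpione2} and Definition \ref{def:fibertype}.

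First I would settle $r=1$. The point is that the only hyperplane of $\CJ_n(1)$ involving the coordinate $x_1$ is $\ker x_1$, since all the braid hyperplanes $\ker(x_1-x_j)$ with $2\le j\le n$ have been deleted; the surviving members $\ker x_j$ for $2\le j\le n$ and $\ker(x_i-x_j)$ for $2\le i<j\le n$ constitute a copy of $\CJ_{n-1}$ in the coordinates $x_2,\dots,x_n$. Hence, after renaming coordinates so that $x_1$ comes last, the projection $(z_1,\dots,z_n)\mapsto(z_2,\dots,z_n)$ restricts to a surjection $\CM(\CJ_n(1))\to\CM(\CJ_{n-1})$ whose fibre over each point is $\{z_1\in\BBC\mid z_1\ne 0\}$, i.e.\ the complex line with one point removed (in fact $\CM(\CJ_n(1))$ is literally the product $\BBC^*\times\CM(\CJ_{n-1})$). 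Thus $\CJ_n(1)$ is strictly linearly fibered over $\CM(\CJ_{n-1})$, and since $\CJ_{n-1}$ is fiber-type by Example \ref{ex:kpione2}, so is $\CJ_n(1)$.

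For $r>1$ I would instead decompose $\CJ_n(r)=\CJ_r\times\CJ_{n-r}$, with the first factor on the coordinates $x_1,\dots,x_r$ and the second on $x_{r+1},\dots,x_n$. Indeed $\CJ_n(r)$ comprises all $n$ coordinate hyperplanes together with exactly those braid hyperplanes $\ker(x_i-x_j)$ for which $i,j\le r$ or $r<i,j$, the deleted ones being precisely the mixed hyperplanes $\ker(x_i-x_j)$ with $i\le r<j$; so no hyperplane of $\CJ_n(r)$ couples the two blocks of coordinates. Both $\CJ_r$ and $\CJ_{n-r}$ are fiber-type by Example \ref{ex:kpione2}, so it suffices to invoke the standard fact that a product of fiber-type arrangements is fiber-type, which one verifies by a short induction on the number of coordinates of the second factor: forgetting one of its coordinates yields a locally trivial fibration of $\CM(\CJ_{n-r})$ over the complement of a fiber-type arrangement, and taking the product with the fixed space $\CM(\CJ_r)$ preserves both that fibration and the fiber-type property of its base.

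The only delicate point is the combinatorial bookkeeping in the case $r>1$ confirming that no surviving hyperplane links the two coordinate blocks, together with the one-line induction that products preserve fiber-typeness; neither is a genuine obstacle. Combining the two cases shows that $\CJ_n(r)$ is fiber-type for every $1\le r<n-1$.
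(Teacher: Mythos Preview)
Your proof is correct and follows the same two-case split as the paper: for $r=1$ you project away $x_1$ and land in $\CM(\CJ_{n-1})$ with fiber $\BBC^*$, and for $r>1$ you recognize the product decomposition $\CJ_n(r)=\CJ_r\times\CJ_{n-r}$. The only difference is that you spell out why a product of fiber-type arrangements is again fiber-type, a step the paper takes for granted.
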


\begin{lemma}\label{lem:type_b}
For $1 \leq s < t < n$ fixed, the $n$-arrangement
$$
\CJ_n(s,t) := \CJ_n\setminus\left\{\ker\left(x_i - x_j\right)\ \middle|\ 1 \leq i \leq s < j \leq t\right\}
$$
is fiber-type.
\begin{proof}
As in the proof of Lemma \ref{lem:type_a}, let $\pi\colon\BBC^n\to\BBC^{n-1}$ be the projection
$\left(z_1, \ldots, z_n\right) \mapsto \left(z_2, \ldots, z_n\right)$.
First, assume $s = 1$. Then $\pi$ induces a locally trivial fibration 
\[
\widetilde{\pi}\colon\CM\left(\CJ_n(1,t)\right) \to\CM\left(\CJ_{n-1}\right)
\]
with fiber the complex plane with $n - t + 1$ points removed. So  
$\CJ_n(1,t)$ is fiber-type.

Now assume $s > 1$. Then $\pi$ induces a locally trivial fibration 
\[
\widetilde{\pi}\colon\CM\left(\CJ_n(s,t)\right)\to \CM\left(\CJ_{n-1}(s-1,t-1)\right) 
\]
with fiber the complex plane with $n - t + s$ points removed. Thus $\CJ_n(s,t)$
is fiber-type by induction on $s$.
\end{proof}
\end{lemma}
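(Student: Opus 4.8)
The plan is to realize $\CJ_n(s,t)$ as a strictly linearly fibered arrangement whose base is the complement of a smaller member of the same family, and then to close an induction on $s$ via Definition~\ref{def:fibertype}. Concretely, I would use the coordinate projection $\pi\colon \BBC^n \to \BBC^{n-1}$, $(z_1,\dots,z_n)\mapsto(z_2,\dots,z_n)$, which deletes the variable $x_1$, and split the hyperplanes of $\CJ_n(s,t)$ according to whether or not they contain the $x_1$-axis $\ell := \ker x_2 \cap \dots \cap \ker x_n$. The hyperplanes containing $\ell$ are $\ker x_1$ together with those $\ker(x_1-x_j)$ that were not deleted; the remaining hyperplanes do not involve $x_1$ and, after the relabelling $y_k := x_{k+1}$, are precisely the members of $\CJ_{n-1}$ when $s=1$ and of $\CJ_{n-1}(s-1,t-1)$ when $s>1$ (indeed the deleted diagonals all involve $x_1$ when $s=1$, and for $s>1$ the ones not involving $x_1$ reindex to the diagonals deleted in $\CJ_{n-1}(s-1,t-1)$). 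This identifies the base of the putative fibration as $\CM(\CJ_{n-1})$, fiber-type by Example~\ref{ex:kpione2}, respectively $\CM(\CJ_{n-1}(s-1,t-1))$, fiber-type by the inductive hypothesis (note $1 \le s-1 < t-1 < n-1$, so the hypothesis applies).

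It then remains to check that $\pi$ restricts to a \emph{locally trivial} fibration $\widetilde\pi\colon \CM(\CJ_n(s,t)) \to \CM(\CB)$ with $\CB$ the base just described and with fiber a complex line with finitely many points removed. Over a base point $(z_2,\dots,z_n)$ the fiber is $\BBC$ with the value $0$ and the values $z_j$ (for those $j$ with $\ker(x_1-x_j)\in\CJ_n(s,t)$) removed; the relevant index set is $\{t+1,\dots,n\}$ when $s=1$ and $\{2,\dots,s\}\cup\{t+1,\dots,n\}$ when $s>1$, giving $n-t+1$, respectively $n-t+s$, removed values. The key point is that this \emph{number} is constant over the base: $z_j\ne 0$ for each such $j$ because every coordinate hyperplane $\ker y_k$ lies in $\CB$, and $z_j\ne z_{j'}$ for distinct relevant indices because the deleted diagonals all have one index $\le s$ and one index in $\{s+1,\dots,t\}$, whereas the relevant indices avoid $\{s+1,\dots,t\}$ entirely, so every diagonal joining two relevant indices still belongs to $\CB$. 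With the removed set of constant cardinality, local triviality of $\widetilde\pi$ is the standard mechanism underlying Definition~\ref{def:strictlinfib}.

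Combining the two steps, $\CJ_n(s,t)$ is strictly linearly fibered over a fiber-type base, hence fiber-type by Definition~\ref{def:fibertype}, which completes the induction; the base case $s=1$ is exactly the instance in which no further diagonal is removed from the base and $\CB=\CJ_{n-1}$. I expect the only real obstacle to be the constancy of the fiber's cardinality over the base --- equivalently, the verification that deleting the diagonals $\ker(x_i-x_j)$ with $1\le i\le s<j\le t$ leaves intact all the incidences among the surviving $x_1$-diagonals needed for a genuine fibration rather than a mere set-theoretic surjection; this is precisely the combinatorial bookkeeping indicated above.
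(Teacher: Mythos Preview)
Your proposal is correct and follows essentially the same approach as the paper: the same coordinate projection deleting $x_1$, the same case split $s=1$ versus $s>1$, the same identification of the base as $\CM(\CJ_{n-1})$ respectively $\CM(\CJ_{n-1}(s-1,t-1))$, the same fiber counts $n-t+1$ and $n-t+s$, and the same induction on $s$. You supply more of the combinatorial bookkeeping (why the removed points in the fiber remain distinct over the base) than the paper spells out, but the argument is the same.
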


\begin{lemma}\label{lem:type_c}
For $1 \leq r < s < t < n$ fixed,  the $n$-arrangement
$$
\CJ_n(r,s,t) := \CJ_n\setminus\left\{\ker\left(x_i - x_j\right)\ \middle|\ 1 \leq i \leq r < j \leq n \quad\text{or}\quad
r < i \leq s < j \leq t\right\}
$$
is fiber-type.
\begin{proof}
Take $\pi\colon\BBC^n\to\BBC^{n-1}$ to be the projection
$\left(z_1, \ldots, z_n\right) \mapsto \left(z_1, \ldots, z_{s-1}, z_{s+1}, \ldots, z_n\right)$.
In case $s > r + 1$, this projection induces a locally trivial fibration 
\[
\widetilde{\pi}\colon\CM\left(\CJ_n(r,s,t)\right) \to \CM\left(\CJ_{n-1}(r, s-1, t-1)\right).
\]
If $s = r + 1$, it induces a locally trivial fibration 
\[
\widetilde{\pi}\colon\CM\left(\CJ_n(r,s,t)\right) \to \CM\left(\CJ_{n-1}(r)\right). 
\]
In both cases the fiber is the complex plane with $n - r + s - t + 1$ points removed.
Now the result follows by induction on $s$ and Lemma \ref{lem:type_a}.
\end{proof}
\end{lemma}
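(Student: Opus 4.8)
The plan is to verify that $\CJ_n(r,s,t)$ satisfies the recursive Definition \ref{def:fibertype} of a fiber-type arrangement, arguing by induction on the middle index $s$, with the base of the induction handled by Lemma \ref{lem:type_a}. The mechanism is the one already used for $\CJ_n$ itself in Example \ref{ex:kpione2}: single out one linear coordinate and show that the projection forgetting it restricts, on the complements, to a locally trivial fibration over the complement of a smaller arrangement from the same family, with fibre $\BBC$ minus finitely many points. Concretely, I would take $\pi\colon\BBC^n\to\BBC^{n-1}$, $(z_1,\dots,z_n)\mapsto(z_1,\dots,z_{s-1},z_{s+1},\dots,z_n)$, forgetting $x_s$, and relabel the target coordinates as $y_1,\dots,y_{n-1}$, with $y_a=x_a$ for $a<s$ and $y_a=x_{a+1}$ for $a\ge s$.

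First I would identify the image arrangement, that is, the set of those hyperplanes of $\CJ_n(r,s,t)$ that are pulled back from $\BBC^{n-1}$ under $\pi$: the coordinate hyperplanes $\ker x_a$ with $a\ne s$, together with the diagonals $\ker(x_a-x_b)$, $a,b\ne s$, that survive the two deletions. Tracking the two deleted index sets under the relabelling (indices above $s$ drop by one), one checks that for $s>r+1$ the deleted diagonals not involving $x_s$ reassemble into exactly $\{\ker(y_a-y_b)\mid 1\le a\le r<b\le n-1\}$ together with $\{\ker(y_a-y_b)\mid r<a\le s-1<b\le t-1\}$, so the image is $\CJ_{n-1}(r,s-1,t-1)$, while for $s=r+1$ the second deleted family degenerates to the empty set and the image is $\CJ_{n-1}(r)$. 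Next I would compute the fibre: fixing the remaining $n-1$ coordinates at a point of the complement of the image arrangement, the hyperplanes of $\CJ_n(r,s,t)$ that actually involve $x_s$ are $\ker x_s$, the $\ker(x_i-x_s)$ with $r<i<s$, and the $\ker(x_s-x_j)$ with $t<j\le n$; hence the fibre is $\BBC$ with the values $0$, the $x_i$ for $r<i<s$, and the $x_j$ for $t<j\le n$ removed. These values are pairwise distinct precisely because the diagonals relating them all lie in $\CJ_{n-1}(r,s-1,t-1)$ (resp.\ in $\CJ_{n-1}(r)$), so the fibre is $\BBC$ minus a constant number of points and, by the standard argument for such coordinate projections of arrangements (cf.\ Definition \ref{def:strictlinfib} and Example \ref{ex:kpione2}), $\pi$ restricts to a locally trivial fibration. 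Since $\CJ_{n-1}(r,s-1,t-1)$ is fiber-type by the inductive hypothesis on $s$ and $\CJ_{n-1}(r)$ is fiber-type by Lemma \ref{lem:type_a}, in either case $\CJ_n(r,s,t)$ is strictly linearly fibered over a fiber-type base, hence fiber-type.

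I expect the one genuine obstacle to be the index bookkeeping in the first step: one has to check carefully that after deleting the row and column indexed by $s$ and re-indexing, the two surviving deleted families recombine into precisely the two defining families of $\CJ_{n-1}(r,s-1,t-1)$, with the correct degeneration to $\CJ_{n-1}(r)$ when $s=r+1$, and that no diagonal is mistakenly deleted or kept. The pairwise distinctness of the deleted fibre values is the same computation seen from the other side, so once the bookkeeping is settled, the fibration statement — and with it the induction — follows at once.
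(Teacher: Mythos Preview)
Your proposal is correct and follows exactly the paper's approach: project away the $s$-th coordinate, identify the base as $\CJ_{n-1}(r,s-1,t-1)$ (degenerating to $\CJ_{n-1}(r)$ when $s=r+1$), and conclude by induction on $s$ together with Lemma~\ref{lem:type_a}. Your fibre description is in fact more precise than the paper's --- your count of $n-r+s-t$ removed points is the correct one, whereas the paper's stated $n-r+s-t+1$ appears to be a minor slip.
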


We observe that the identification of $\CJ_n$ with a braid arrangement mentioned in Example \ref{ex:kpione2} yields alternative proofs of
Lemmas \ref{lem:type_a} - \ref{lem:type_c} via Stanley's Theorem  \cite[Prop.~2.8]{stanley:super}.
For, the subarrangement $\CJ_n(r)$ corresponds to the
graphic arrangement with underlying graph the union of the complete subgraphs on the vertices $\{0, 1, \ldots, r\}$
and $\{0, r + 1, \ldots, n\}$. Further, $\CJ_n(s,t)$ corresponds to the union of the complete subgraphs on the vertices
$\{0, \ldots, s, t + 1, \ldots, n\}$ and $\{0, s + 1, \ldots, t, t + 1, \ldots, n\}$. 
The arrangement $\CJ_n(r,s,t)$ then corresponds to the union of complete subgraphs on the vertices
$\{0, 1, \ldots, r\}$, $\{0, r + 1, \ldots, s, t + 1, \ldots, n\}$ and $\{0, s + 1, \ldots, t, t + 1, \ldots, n\}$.
In all cases the graph is clearly chordal, so
the arrangement is fiber-type, thanks to   \cite[Prop.~2.8]{stanley:super}. 
\bigskip

Now let $\CI$ be of type (I), (II) or (III) listed above, 
set $\CA = \CA_\CI$ and in types (I) - (III) let $\CB$ be $\CJ_{n-1}(r)$, $\CJ_{n-1}(s,t)$ or
$\CJ_{n-1}(r,s,t)$, respectively. 
Consider the map
\begin{equation}
\label{eq:deff}
f\colon \CM(\CA) \to \CM(\CB) \quad \text{given by}\quad
(y_1, \ldots, y_n) \mapsto (y_n^2 - y_1^2, \ldots, y_n^2 - y_{n-1}^2).
\end{equation}

Note that in case $\CI = \varnothing$, i.e.~$\CA_\CI = \CA(\Phi)$, and $\CB = \CJ_n$, the map $f$ was  
used in \cite{brieskorn:tresses} to show asphericity in type $D_n$, see also \cite[\S 5]{falkrandell:fiber-type}.
Our argument that the map $f$ in \eqref{eq:deff} is a fibration over these larger bases is inspired by
an argument due to Li Li \cite{LiLi} who worked out the details
of Brieskorn's approach  \cite{brieskorn:tresses}. 

Set $Y := \CM(\CA)$ and $Z := \CM(\CB)$.
We can embed $Y$ into $\BBP^n \times Z$ by the ``graph'' map $\iota\colon Y \to \BBP^n \times Z$ defined by
$$\left(y_1, \ldots, y_n\right) \mapsto \left((1 :y_1: \ldots :y_n),f(y_1, \ldots, y_n)\right)$$
and denote the image of $Y$ by $C := \iota(Y)$. Then the map $f$ is just $f = \pi\vert_C \circ \widetilde{\iota}$, where
$\widetilde{\iota}\colon Y \to C$ is the homeomorphism induced by $\iota$ and $\pi\vert_C$ is the restriction of the projection
$\pi\colon \BBP^n \times Z \to Z$ to $C$. Thus $f$ is a locally trivial fibration if and only if $\pi\vert_C$ is one.

Now let $S_i$ be the hypersurface in $\BBC^n \times Z \subset \BBP^n \times Z$ defined by $z_i = y_n^2 - y_i^2$,
so that $C = S_1 \cap \ldots \cap S_{n - 1}$.
For $z = \left(z_1, \ldots, z_{n - 1}\right) \in Z$, let
$$\left(S_i\right)_z := S_i \cap \left(\BBC^n \times \left\{z\right\}\right) \subset \BBP^n \times \left\{z\right\}\quad\text{and}$$
$$C_z := \left(S_1\right)_z \cap \ldots \cap \left(S_{n - 1}\right)_z,$$
i.e. $C_z$ is the fiber of $\pi\vert_C$ over $z$. 
Moreover, let $\overline{C}$ and $\overline{C_z}$ denote the projective closure of $C$ and $C_z$ in $\BBP^n \times Z$, respectively.
Then
$$\overline{C} = \overline{S_1} \cap \ldots \cap \overline{S_{n-1}}\quad\text{and}$$
$$\overline{C_z} = \overline{\left(S_1\right)_z} \cap \ldots \cap \overline{\left(S_{n - 1}\right)_z},$$
where $\overline{S_i}$ is the hypersurface in $\BBP^n \times Z$ given by $z_iy_0^2 = y_n^2 - y_i^2$ and for
$z = \left(z_1, \ldots, z_{n - 1}\right) \in Z$,
$$\overline{\left(S_i\right)_z} := \overline{S_i} \cap \left(\BBP^n \times \left\{z\right\}\right).$$

Since $\overline{S_i}$ is defined by $y_n^2 - y_i^2 = z_i y_0$ for all $1 \leq i \leq n-1$ and the points at infinity are given 
by setting $y_0 = 0$, we get that $\overline{C_z}$ has the following
$2^{n-1}$ points at infinity:
$$\left(\left(0 : \pm1 : \ldots : \pm1 : 1\right), \left(z_1, \ldots, z_{n-1}\right)\right).$$

\begin{lemma}\label{lem:smoothfiber}
For each $z \in Z$, the projective closure $\overline{C_z}$ of $C_z$
is a smooth curve.
\begin{proof}
The $\overline{\left(S_i\right)_z}$ intersect transversally, which can be seen by looking at the Jacobian $J = \left(\frac{\partial f_j}{\partial t_i}(y)\right)$
of the polynomials given by
$$f_i\colon \overline{Y} \to \BBC, \quad \left(t_0 : t_1 : \ldots : t_n\right) \mapsto t_n^2 - t_i^2 - z_it_0^2,$$
where $\overline{Y}$ is the projective closure of $Y$ in $\BBP^n$.
\end{proof}
\end{lemma}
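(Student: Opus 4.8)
The plan is to establish smoothness of $\overline{C_z}$ by the Jacobian criterion applied to the $n-1$ quadrics cutting it out. Writing $f_i = t_n^2 - t_i^2 - z_i t_0^2$ for $1 \le i \le n-1$, we have $\overline{C_z} = \overline{(S_1)_z} \cap \dots \cap \overline{(S_{n-1})_z}$, the common zero locus of the $f_i$ in $\BBP^n$, so it suffices to show that the $(n-1)\times(n+1)$ Jacobian $J = \bigl(\partial f_i/\partial t_j\bigr)$ has rank $n-1$ at every point of $\overline{C_z}$: this forces the Zariski tangent space at each such point to be $1$-dimensional, so $\overline{C_z}$ is smooth of pure dimension $1$, and being a positive-dimensional complete intersection in $\BBP^n$ it is connected, hence irreducible. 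The $i$-th row of $J$ is
\[
\bigl(-2z_i t_0,\ 0,\dots,0,\ -2t_i,\ 0,\dots,0,\ 2t_n\bigr),
\]
with $-2t_i$ in the $t_i$-slot, so the submatrix of $J$ on the columns $t_1,\dots,t_{n-1}$ is the diagonal matrix $\operatorname{diag}(-2t_1,\dots,-2t_{n-1})$.

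I would then run through the points $p$ of $\overline{C_z}$. If $t_1,\dots,t_{n-1}$ are all nonzero at $p$, the diagonal block above is invertible and $\rk J = n-1$. This handles every point of $\overline{C_z}$ at infinity, in particular the $2^{n-1}$ points $\bigl((0:\pm1:\dots:\pm1:1),z\bigr)$: indeed at a point with $t_0 = 0$ one must have all $t_i \ne 0$, since $t_0 = t_i = 0$ together with $f_i(p) = 0$ forces $t_n = 0$, and then $f_j(p) = -t_j^2 = 0$ for all $j$ forces $p$ to be the zero vector. If exactly one coordinate $t_{i_0}$ vanishes at $p$, the same observation gives $t_0 \ne 0$, and $f_{i_0}(p) = 0$ reads $z_{i_0} = t_n^2/t_0^2$; since $z \in \CM(\CB) \subseteq \CM(\CJ_{n-1})$ has $z_{i_0} \ne 0$, the $i_0$-th row of $J$ carries the nonzero entry $-2z_{i_0}t_0$ in the $t_0$-column, which together with the $n-2$ nonzero diagonal entries in the other columns again yields $\rk J = n-1$.

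The remaining case, where several of $t_1,\dots,t_{n-1}$ vanish at $p$, is where the argument acquires real content and where the classification of the ideals $\CI$ of types (I)--(III) enters. If $t_i = t_j = 0$ with $i \ne j$, then $z_i = z_j\;(= t_n^2/t_0^2)$, so rows $i$ and $j$ of $J$ coincide and $J$ alone is rank-deficient at $p$; and such a coincidence $z_i = z_j$ can occur only because the braid hyperplane $\ker(z_i - z_j)$ was deleted in passing from $\CJ_{n-1}$ to $\CB \in \{\CJ_{n-1}(r),\,\CJ_{n-1}(s,t),\,\CJ_{n-1}(r,s,t)\}$, i.e.\ because $i$ and $j$ lie in different ``blocks'' of $\CB$. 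The way to recover smoothness is to use that $\overline{C_z}$ is a \emph{smooth branch} of the possibly singular intersection $\overline{(S_1)_z} \cap \dots \cap \overline{(S_{n-1})_z}$: on the fibre $C_z \subseteq \CM(\CA_\CI)$ the relation $z_i = z_j$ forces $y_i^2 = y_j^2$, and one reads off from the explicit shape of the ideals in types (I)--(III) that exactly one of $e_i - e_j,\,e_i + e_j$ lies in $\CI^c$, hence exactly one of the hyperplanes $\ker(y_i - y_j),\,\ker(y_i + y_j)$ belongs to $\CA_\CI$, so that one of the linear identities $y_i = y_j$ or $y_i = -y_j$ holds on all of $C_z$, and therefore on $\overline{C_z}$. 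Using one such identity for each coincidence to eliminate coordinates realises $\overline{C_z}$ as a complete intersection of quadrics $t_i^2 = t_n^2 - c_B t_0^2$, one per block $B$ with common value $c_B$ of the $z_i$, $i \in B$, inside a lower-dimensional projective space; rerunning the diagonal/$t_0$-column argument for this reduced system, in which no two quadrics now share the same vanishing pair of columns, gives rank $n-1$ throughout. I expect this last bookkeeping to be the main obstacle: one has to check, separately for the three families (I), (II), (III), that the only coincidences $z_i = z_j$ occurring on $\overline{C_z}$ are cross-block ones and that each forces a single linear identity with the correct sign, so that the reduced quadric system is everywhere transverse.
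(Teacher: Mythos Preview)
Your approach is exactly the paper's: compute the Jacobian of the defining quadrics $f_i = t_n^2 - t_i^2 - z_i t_0^2$ and check it has rank $n-1$. The paper stops at the one-line assertion ``the $\overline{(S_i)_z}$ intersect transversally, which can be seen by looking at the Jacobian''; you have carried out the computation and, crucially, noticed what the paper's terse proof does not address: coincidences $z_i = z_j$ \emph{do} occur for $z\in Z$ (precisely when $\ker(x_i-x_j)$ was deleted in passing from $\CJ_{n-1}$ to $\CB$), and at any point of $\bigcap_k \overline{(S_k)_z}$ with $t_i=t_j=0$ the Jacobian then has two equal rows, so that intersection is genuinely singular there. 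In particular the displayed equality $\overline{C_z}=\bigcap_k\overline{(S_k)_z}$ written just before the lemma cannot hold for such $z$, and the paper's proof as stated is incomplete.

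Your fix---observing that $C_z\subseteq\CM(\CA_\CI)$, so that $y_i^2=y_j^2$ together with the surviving hyperplane $\ker(y_i-y_j)\in\CA_\CI$ forces the linear identity $y_i=-y_j$ on $C_z$ and hence on its closure---is the right way to realise $\overline{C_z}$ as a smooth branch. The ``bookkeeping'' you flag does go through: for every deleted pair $(i,j)$ in each of the three families one checks $e_i-e_j\in\CI^c$ (always) and $e_i+e_j\in\CI$ (because the deletion conditions $i\le r<j$, resp.\ $i\le s<j\le t$, resp.\ either of these, are exactly what force $e_i+e_j$ to dominate the generator), so the sign is always the same and the reduced quadric system, with one quadric per distinct $z$-value, has everywhere full-rank Jacobian by your diagonal/$t_0$-column argument. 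One point to watch that you did not make explicit: in type~(III) one can have a triple coincidence $z_a=z_b=z_c$ with $a\le r<b\le s<c\le t$, and then the forced identities $y_a=-y_b$, $y_b=-y_c$ give $y_a=y_c$, which is forbidden on $\CM(\CA_\CI)$; so $C_z=\varnothing$ for such $z$, and $\overline{C_z}$ is then empty rather than a smooth curve. This is less a defect of your argument than a further wrinkle in the paper's setup that your more careful analysis exposes.
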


Moreover, we have the following:
\begin{lemma}
For each $z \in Z$, $\overline{C_z}$ is connected.
\begin{proof}
Every point in $\overline{C_z}$ satisfies the equations
$$\frac{y_n^2 - y_1^2}{z_1} = \ldots = \frac{y_n^2 - y_{n-1}^2}{z_{n-1}} = y_0^2.$$
First take $U_n$ to be the subset of $\overline{C_z}$ consisting of points $\left(\left(y_0 : \ldots : y_n\right), \left(z_1, \ldots, z_{n-1}\right)\right)$
with $y_n \neq 0$.
Thus considering the change of coordinates $x_i := \frac{y_i}{y_n}$ and fixing some $1 \leq j \leq n-1$, we get that
$$x_i^2 = g_i^j(x_j)\quad\text{for all}\quad 1 \leq i \leq n-1\quad\text{and}\quad x_0^2 = g_0^j(x_j),$$
where $g_i^j(x) = \frac{z_i}{z_j}x^2 + \frac{z_j-z_i}{z_j}$ and $g_0^j(x) = -\frac{1}{z_j}x^2 + \frac{1}{z_j}$. Let $\alpha_0$ and $\alpha_1$  be the two branches
of $y = x^2$. Then for any point $p \in U_n$ there are indices $k_i \in \left\{0, 1\right\}$ such that
\begin{multline*}p = \left(\left(\alpha_{k_0}(g_0^j(x_j)) : \ldots : \alpha_{k_{j-1}}(g_{j-1}^j(x_j)) : x_j : \right.\right. \\
\left.\left. \alpha_{k_{j+1}}(g_{j+1}^j(x_j)) : \ldots : \alpha_{k_{n-1}}(g_{n-1}^j(x_j)) : 1\right), \left(z_1, \ldots, z_{n-1}\right)\right).\end{multline*}
So by choosing an appropriate path in $\BBC$, we may path-connect $p$ to one of the points at infinity
$\left(\left(0 : \pm1 : \ldots : \pm1 : 1\right), \left(z_1, \ldots, z_{n-1}\right)\right)$. As $1 \leq j \leq n-1$ is arbitrary and $g_i^j(x) = g_i^j(-x)$, any
point $p \in U_n$ is path-connected to the point $\left(\left(0 : 1 : \ldots : 1\right), \left(z_1, \ldots, z_{n-1}\right)\right)$.

Now take $U_1$ to be the subset of $\overline{C_z}$ consisting of points $\left(\left(y_0 : \ldots : y_n\right), \left(z_1, \ldots, z_{n-1}\right)\right)$ with
$y_1 \neq 0$ and observe that $U_1 \cup U_n = \overline{C_z}$. By a similar argument as the one above, for any point $q \in U_1$ there are indices $k_i \in \left\{0, 1\right\}$ such that
$$q = \left(\left(\alpha_{k_0}(h_0(x_n)) : 1 : \alpha_{k_2}(h_2(x_n)) : \ldots : \alpha_{k_{n-1}}(h_{n-1}(x_n)) : x_n\right), \left(z_1, \ldots, z_{n-1}\right)\right),$$
where $h_0(x) = \frac{1}{z_1}x_n^2 - \frac{1}{z_1}$, $h_i(x) = \frac{z_1 - z_i}{z_1}x^2 + \frac{z_i}{z_1}$ and $x_i = \frac{y_i}{y_1}$.
Now we can again choose a path in $\BBC$ that connects $q$ to one of the points at infinity $\left(\left(0 : \pm1 : \ldots : \pm1 : 1\right), \left(z_1, \ldots, z_{n-1}\right)\right)$.
Thus, $\overline{C_z}$ is connected.
\end{proof}
\end{lemma}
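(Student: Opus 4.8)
The plan is to derive connectedness of $\overline{C_z}$ from path-connectedness, using the description of $\overline{C_z}$ as a branched cover of a projective line. Consider the morphism $\phi\colon\overline{C_z}\to\BBP^1$ sending $(y_0:y_1:\cdots:y_n)$ to $(y_0:y_n)$. This is well defined, since a point with $y_0=y_n=0$ on $\overline{C_z}$ would force $y_i^2=y_n^2-z_iy_0^2=0$ for all $i$, which is impossible in $\BBP^n$; as a nonconstant morphism from the smooth projective curve $\overline{C_z}$ (Lemma~\ref{lem:smoothfiber}) it is finite, and its degree is $2^{n-1}$ because over a generic point $[1:t]$ the fibre is $\{(1:\pm\sqrt{t^2-z_1}:\cdots:\pm\sqrt{t^2-z_{n-1}}:t)\}$. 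The first step is to identify the branch locus of $\phi$: it consists precisely of the finitely many $[1:t]$ with $t^2=z_i$ for some $i$, while the fibre over $[0:1]$ is the set of $2^{n-1}$ points at infinity and is unramified.

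Since $\overline{C_z}$ is smooth and $\BBP^1$ is irreducible, $\overline{C_z}$ is connected if and only if the monodromy action of $\pi_1$ of $\BBP^1$ minus the branch locus on a generic fibre is transitive. Identifying that fibre with $(\BBZ/2\BBZ)^{n-1}$ through the sign choices, a small loop around a branch point over $t_0$ with $t_0^2=z_i$ changes the sign of the $i$-th square root; provided the $z_i$ are pairwise distinct it changes that sign and no other, so all the elementary sign flips $\sigma_1,\dots,\sigma_{n-1}$ lie in the monodromy group, they generate $(\BBZ/2\BBZ)^{n-1}$, and transitivity follows. A more hands-on variant is to cover $\overline{C_z}$ by the two charts $U_n=\{y_n\neq0\}$ and $U_1=\{y_1\neq0\}$ (their union is all of $\overline{C_z}$, since a point with $y_n=0$ has $y_0\neq0$, whence $y_1^2=-z_1y_0^2\neq0$), to use on $U_n$ the coordinates $x_a=y_a/y_n$ and observe that, after fixing one index $j$, each $x_a^2$ becomes an explicit even quadratic in $x_j$; a point of $U_n$ is then determined by $x_j$ together with a choice of square root for the remaining coordinates, and one path-connects it to the distinguished point at infinity $(0:1:\cdots:1)$ by first moving $x_j$ along a path in $\BBC$ avoiding the finitely many zeros of the quadratics governing the other coordinates until $x_0=0$, then flipping the signs back to $+1$ one at a time along small loops around the zeros of the individual quadratics. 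The analogous parametrisation on $U_1$ connects any point of $U_1$ to some point at infinity, and every point at infinity already lies in $U_n$, hence connects further to $(0:1:\cdots:1)$. Either way $\overline{C_z}$ is path-connected, hence connected.

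The step I expect to be the main obstacle is the control of the branch locus — equivalently, pinning down which coordinates of $\overline{C_z}$ are forced to coincide — since this is exactly what makes the elementary sign flips available (or, in the second route, what lets the correcting loops be chosen to change a single square root at a time). This is a purely combinatorial point, governed by the shape of $\CB$ (one of $\CJ_{n-1}(r)$, $\CJ_{n-1}(s,t)$, $\CJ_{n-1}(r,s,t)$) and ultimately by the ideal $\CI$, and it is the same point that underlies the transversality assertion in Lemma~\ref{lem:smoothfiber}. Once it is in place the remainder is routine, and the monodromy formulation gives the shortest write-up.
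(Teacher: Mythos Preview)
Your hands-on second route---covering $\overline{C_z}$ by the charts $U_n$ and $U_1$, parametrising by one affine coordinate, and flipping signs one at a time to reach the distinguished point $(0:1:\cdots:1)$---is exactly the paper's proof; the paper compresses the sign-flipping step into the phrase ``as $1\le j\le n-1$ is arbitrary and $g_i^j(x)=g_i^j(-x)$'' without isolating the question of whether the individual flips are really available. Your monodromy formulation via the degree-$2^{n-1}$ cover $\phi\colon\overline{C_z}\to\BBP^1$, $(y_0:\cdots:y_n)\mapsto(y_0:y_n)$, is a genuinely cleaner repackaging of the same mechanism: the elementary flips $\sigma_i$ are the local monodromies at the branch points $t^2=z_i$, and connectedness becomes transitivity. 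This makes explicit precisely what the paper's path argument is using.

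The obstacle you flag is real and the paper does not address it either. Since $\CB$ is a \emph{proper} subarrangement of $\CJ_{n-1}$, certain hyperplanes $\ker(x_i-x_j)$ are absent, so there exist $z\in Z$ with $z_i=z_j$; at such $z$ the intersection $\bigcap_k\overline{(S_k)_z}$ is nodal (the branches $y_i=y_j$ and $y_i=-y_j$ meet where $y_i=y_j=0$), the Jacobian in Lemma~\ref{lem:smoothfiber} drops rank there, and the local monodromy contributes only $\sigma_i\sigma_j$, so transitivity on $2^{n-1}$ sheets fails. The combinatorics you anticipate is this: whenever $\ker(x_i-x_j)$ has been removed from $\CJ_{n-1}$ to form $\CB$, one checks from the description of $\CI$ in cases (I)--(III) that $e_i+e_j\in\CI$, so $\ker(y_i+y_j)\notin\CA_\CI$ while $\ker(y_i-y_j)\in\CA_\CI$; hence on the genuine fibre $C_z\subset Y$ one is forced into $y_i=-y_j$, the closure $\overline{C_z}$ sits inside $\{y_i+y_j=0\}$ rather than being the full quadric intersection, and after the substitution one is back in the nondegenerate situation with one fewer coordinate. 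Neither your sketch nor the paper's spells this reduction out.
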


Note that this also proves that $C_z$ is connected:\ as two points in $C_z$ are connected by a path through finitely many points at infinity and $\overline{C_z}$ is locally homeomorphic
to $\BBC$, we can alter the path around each of the points at infinity to get a path that completely lies inside $C_z$.

The above lemmas prove the following:
\begin{corollary}\label{cor:riemann}
For each $z \in Z$, the curve
$\overline{C_z}$ is a connected Riemann surface and $C_z$ is a connected Riemann surface with $2^{n-1}$ puncture points.
\end{corollary}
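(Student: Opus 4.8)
The plan is to assemble Corollary \ref{cor:riemann} directly from the two preceding lemmas, treating it as a bookkeeping statement rather than a new computation. First I would recall that by Lemma \ref{lem:smoothfiber}, for each $z \in Z$ the projective closure $\overline{C_z}$ is a smooth projective curve over $\BBC$, and hence a compact complex one-manifold, i.e.\ a compact Riemann surface (possibly disconnected a priori). Next I would invoke the preceding lemma, which shows $\overline{C_z}$ is connected; combining these two facts gives that $\overline{C_z}$ is a connected compact Riemann surface. This is the first assertion.

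For the second assertion, I would pin down exactly which points of $\overline{C_z}$ lie outside $C_z$. By construction $C_z = (S_1)_z \cap \dots \cap (S_{n-1})_z$ sits in the affine chart $\BBC^n \times \{z\} \subset \BBP^n \times \{z\}$, so the complement $\overline{C_z} \setminus C_z$ consists precisely of the points of $\overline{C_z}$ at infinity, i.e.\ with $y_0 = 0$. These were already computed in the excerpt: setting $y_0 = 0$ in the defining equations $y_n^2 - y_i^2 = z_i y_0^2$ forces $y_i^2 = y_n^2$ for all $1 \le i \le n-1$, so the points at infinity are exactly the $2^{n-1}$ points $((0 : \pm 1 : \dots : \pm 1 : 1), z)$. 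I would also remark that by the transversality in Lemma \ref{lem:smoothfiber} these are smooth points of $\overline{C_z}$ and that they are pairwise distinct (distinct sign patterns give distinct points of $\BBP^n$), so removing them yields $C_z$ as a connected Riemann surface with exactly $2^{n-1}$ punctures. Connectedness of $C_z$ itself is the content of the parenthetical remark following the previous lemma, which I would simply cite: a path in $\overline{C_z}$ between two points of $C_z$ can be perturbed around each of the finitely many punctures (using that $\overline{C_z}$ is locally biholomorphic to $\BBC$ near those smooth points) to stay inside $C_z$.

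The only genuine subtlety — and hence the step I would be most careful about — is making sure the count $2^{n-1}$ is the number of \emph{distinct} points rather than a multiplicity-with-repetition count, and that none of them accidentally fails to lie on $\overline{C_z}$; this is handled by the observation that the sign vectors $(\pm 1, \dots, \pm 1) \in \BBP^n$ are genuinely distinct and that each satisfies every equation $z_i y_0^2 = y_n^2 - y_i^2$ when $y_0 = 0$, independently of $z$. Everything else is immediate from the cited results, so the corollary follows.

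\begin{proof}
By Lemma \ref{lem:smoothfiber}, $\overline{C_z}$ is a smooth projective curve, hence a compact Riemann surface, and by the preceding lemma it is connected. This proves the first statement. Since $C_z$ lies in the affine part $\BBC^n \times \{z\}$ of $\BBP^n \times \{z\}$, the difference $\overline{C_z} \setminus C_z$ consists exactly of the points at infinity of $\overline{C_z}$, namely (as computed above) the $2^{n-1}$ pairwise distinct points $((0 : \pm 1 : \ldots : \pm 1 : 1), (z_1, \ldots, z_{n-1}))$. These are smooth points of $\overline{C_z}$ by the transversality established in the proof of Lemma \ref{lem:smoothfiber}, so removing them from the connected Riemann surface $\overline{C_z}$ yields the Riemann surface $C_z$ with precisely $2^{n-1}$ puncture points; its connectedness was noted in the remark following the previous lemma. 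This proves the second statement.
\end{proof}
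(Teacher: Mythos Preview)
Your proposal is correct and matches the paper's approach exactly: the paper presents this corollary with the one-line preamble ``The above lemmas prove the following'' and gives no further proof, treating it precisely as the bookkeeping consequence of Lemma~\ref{lem:smoothfiber}, the connectedness lemma, the prior computation of the $2^{n-1}$ points at infinity, and the remark that $C_z$ itself is connected. You have simply spelled out those steps explicitly.
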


\begin{theorem}
The map $f$ defined in \eqref{eq:deff} is a locally trivial fibration.
\begin{proof}
Set $D = \overline{C}\setminus C$, the intersection of $\overline{C}$ with the infinity hyperplane. Then $\FS = \left\{C, D\right\}$ is a Whitney stratification of $\overline{C}$:\ it
is obviously locally finite and satisfies the condition of the frontier and as $C$ is open and $D$ its boundary, $\FS$ trivially satisfies Whitney condition $(B)$. The intersection of $D$
with a fiber $\BBP^n \times \left\{z\right\}$ of the projection $\pi$ is just the set of the $2^{n-1}$ points $\left(\left(0 : \pm 1 : \ldots : \pm 1 : 1\right), \left(z_1, \ldots, z_{n-1}\right)\right)$,
which we can think of locally as $2^{n-1}$ sections of $\pi$. Thus $\pi\vert_D$ is locally homeomorphic and therefore it is a submersion. The map $\pi\vert_C$ is a submersion as well,
which can be seen by considering the Jacobian again. Moreover, $\pi\vert_{\overline{C}}$ is proper, as $\overline{C}$ is a closed subset of $\BBP^n \times Z$ and $\pi$ is proper. Now
using Thom's first isotopy lemma, Theorem \ref{thm:thom}, $\pi\vert_{\overline{C}}$ is a locally trivial fibration and, in particular, $f = \pi\vert_C \circ \widetilde{\iota}$ is a fibration as well.
\end{proof}
\end{theorem}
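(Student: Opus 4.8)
The plan is to exploit the graph construction already set up: since $f = \pi\vert_{C}\circ\widetilde{\iota}$ with $\widetilde{\iota}\colon Y \to C$ a homeomorphism, it suffices to show that $\pi\vert_{C}$ is a locally trivial fibration, and for this I would invoke Thom's first isotopy lemma (Theorem \ref{thm:thom}) for the \emph{proper} map $\pi\vert_{\overline{C}}\colon \overline{C}\to Z$ on the projective closure. (That $f$ is well defined, i.e.\ that it really maps $\CM(\CA)$ into $\CM(\CB)$, is part of the setup in \eqref{eq:deff} and follows in each of the cases (I)--(III) from the choice of $\CB$, using $y_i^2 - y_j^2 = (y_n^2 - y_j^2) - (y_n^2 - y_i^2)$.)

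The first step is to stratify the compact set $\overline{C}\subseteq\BBP^n\times Z$. I would take $\FS = \{C, D\}$ with $D := \overline{C}\setminus C$ the locus at infinity. Checking that this is a Whitney stratification is straightforward: $\FS$ is locally finite; the frontier condition holds because $C$ is open in $\overline{C}$ with boundary $D$; and Whitney's condition (B) holds trivially for a two-piece stratification of this shape. In fact the transversality argument of Lemma \ref{lem:smoothfiber}, carried out in the total space rather than fiberwise, shows that $\overline{C}$ is smooth, so that $\{C, D\}$ is just a smooth closed submanifold together with its open complement, which is automatically Whitney-regular. Along the way I would record that $C$ is smooth and that $D$ is smooth, the latter because fiberwise $D$ is the set of $2^{n-1}$ points $\bigl((0 : \pm 1 : \cdots : \pm 1 : 1), z\bigr)$, which assemble into $2^{n-1}$ pairwise disjoint smooth sections of $\pi$ over $Z$.

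The second step is to check the hypotheses of Theorem \ref{thm:thom}. Properness of $\pi\vert_{\overline{C}}$ is immediate, since $\overline{C}$ is closed in $\BBP^n\times Z$ and $\BBP^n$ is compact. The restriction $\pi\vert_{D}$ is a submersion because, by the section description, it is a local homeomorphism. For $\pi\vert_{C}$ I would reuse the Jacobian of the defining equations $t_n^2 - t_i^2 - z_i t_0^2 = 0$ from Lemma \ref{lem:smoothfiber}: these cut out $\overline{C}$ transversally and depend nondegenerately on the $z_i$, so $d\pi$ is surjective at every point of $C$. Thom's lemma then yields that $\pi\vert_{\overline{C}}$ is a locally trivial fibration, hence so is its restriction $\pi\vert_{C}$ to the open stratum, and composing with $\widetilde{\iota}$ shows that $f$ is a locally trivial fibration; by Corollary \ref{cor:riemann} its fiber over any $z$ is $C_z$, a connected Riemann surface with $2^{n-1}$ punctures.

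The step I expect to need the most care is the geometry at infinity: one must be sure that for every $z \in Z$ the fiber $D\cap(\BBP^n\times\{z\})$ consists of \emph{exactly} the $2^{n-1}$ listed points --- uniformly across the ideal types (I), (II), (III) and independently of the precise base $\CB$ --- and that these points move smoothly with $z$ without colliding, so that $D$ is genuinely a disjoint union of sections over the noncompact base $Z$ rather than merely a subvariety. Everything else is either formal (the Whitney conditions, properness, the submersion property of $\pi\vert_D$) or a direct reuse of the transversality computation in Lemma \ref{lem:smoothfiber}.
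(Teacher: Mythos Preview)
Your proposal is correct and follows essentially the same argument as the paper: stratify $\overline{C}$ as $\{C,D\}$, verify the Whitney conditions, check that $\pi$ is a submersion on each stratum (via the section description on $D$ and the Jacobian on $C$), note properness of $\pi\vert_{\overline{C}}$, and apply Thom's first isotopy lemma. The extra care you flag about the fiberwise description of $D$ and the smoothness of $\overline{C}$ is exactly what the paper glosses over with ``locally $2^{n-1}$ sections'' and the reference back to Lemma~\ref{lem:smoothfiber}.
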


This proves the following:

\begin{theorem}
If $\CI$ is of type (I), (II) or (III), then $\CA_\CI$ is $K(\pi, 1)$.
\begin{proof}
Consider the map $f\colon Y \to Z$ from \eqref{eq:deff}. Clearly, the fiber $f^{-1}(z)$ is homeomorphic to $C_z$, so by Corollary \ref{cor:riemann} it is a connected Riemann surface with $2^{n-1}$ puncture
points. Thus by the Uniformization Theorem, it is a $K(\pi, 1)$-space. By Lemmas \ref{lem:type_a}, \ref{lem:type_b} and \ref{lem:type_c}, $Z$ is a $K(\pi, 1)$-space as well. This
proves the theorem.
\end{proof}
\end{theorem}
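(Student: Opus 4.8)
The plan is to exhibit $\CM(\CA_\CI)$ as the total space of a locally trivial fibration whose base and fiber are both aspherical, and then invoke the long exact homotopy sequence of the fibration (cf.~\cite[Thm.~5.9]{orlikterao:arrangements}): if $\pi_k$ of base and fiber vanish for $k\ge 2$, the same holds for the total space, so it is $K(\pi,1)$.

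Concretely, for $\CI$ of type (I), (II) or (III) I would set $Y=\CM(\CA_\CI)$, let $\CB$ be $\CJ_{n-1}(r)$, $\CJ_{n-1}(s,t)$ or $\CJ_{n-1}(r,s,t)$ as appropriate, put $Z=\CM(\CB)$, and take $f\colon Y\to Z$ to be the quadratic folding map of \eqref{eq:deff}. The first thing to check is that $f$ is well defined, i.e.\ that the image of a point of $Y$ avoids every hyperplane of $\CB$; since in type $D_n$ one has $y_n^2-y_i^2=(y_n-y_i)(y_n+y_i)$, the fiber of $f$ over a point is cut out by the equations of the deleted roots of $\CA(\Phi)\setminus\CA_\CI$ together with the $n-1$ Boolean equations $y_i=0$ on the first $n-1$ coordinates, and tracking exactly which coordinate differences and which coordinates are permitted to vanish is what forces the base to be one of the three fiber-type arrangements of Lemmas~\ref{lem:type_a}--\ref{lem:type_c}.

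Granting this, I would then argue as follows. By the fibration theorem established above (via Thom's first isotopy lemma, Theorem~\ref{thm:thom}, applied to the closure $\overline C$ stratified by $\FS=\{C,D\}$), $f$ is a locally trivial fibration. Its fiber over $z$ is homeomorphic to $C_z$, which by Corollary~\ref{cor:riemann} is a connected Riemann surface with $2^{n-1}$ punctures; such an open Riemann surface has contractible universal cover (it is uniformized by $\BBC$ or the disk), hence is $K(\pi,1)$. The base $Z=\CM(\CB)$ is $K(\pi,1)$ because $\CB$ is fiber-type by Lemmas~\ref{lem:type_a}, \ref{lem:type_b} and \ref{lem:type_c}, and fiber-type arrangements are aspherical (\cite[Prop.~5.12]{orlikterao:arrangements}). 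Feeding base and fiber into the homotopy exact sequence of $f$ kills $\pi_k(Y)$ for $k\ge 2$, so $Y=\CM(\CA_\CI)$ is $K(\pi,1)$, as claimed.

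The genuinely delicate part is not this last homotopy-theoretic bookkeeping, which is formal, but the input to it: verifying that $f$ lands in $\CM(\CB)$ with fiber exactly a $2^{n-1}$-punctured connected curve, and that the hypotheses of Thom's lemma hold over the enlarged base $Z$ --- properness of $\pi|_{\overline C}$, the Whitney condition $(B)$ for $\{C,D\}$, and the submersion property on each stratum, the latter resting on the transversality computation behind Lemma~\ref{lem:smoothfiber} and the connectedness of $\overline{C_z}$. Once those are in hand the theorem drops out immediately.
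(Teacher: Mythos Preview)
Your proposal is correct and follows essentially the same route as the paper: use the quadratic map $f$ of \eqref{eq:deff} as a locally trivial fibration (established via Thom's lemma), identify the fiber with the punctured Riemann surface $C_z$ of Corollary~\ref{cor:riemann} so that it is aspherical by uniformization, observe the base is aspherical by Lemmas~\ref{lem:type_a}--\ref{lem:type_c}, and conclude with the long exact homotopy sequence. Your additional remarks on well-definedness of $f$ and on where the real work lies are accurate and complement the paper's terse final step.
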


This concludes the proof of Theorem \ref{thm:classical}. Note that none of the arrangements of ideal type $\CA_\CI$ of types (I) - (III) considered here 
are supersolvable (cf.~\cite[Lem.~6.2]{hultman:koszul}) and none of them are simplicial.
So these families of $\CA_\CI$ also provide new classes of $K(\pi,1)$-arrangements.

\begin{remark}\label{rem:fundgrpprod}
(i). If $\CA$ is strictly linearly fibered over $\CB$, then there always exists a section of the associated fibration of the complements 
$\CM(\CA) \to \CM(\CB)$, 
e.g.~see \cite[Cor.~1.1.6]{cohen}. 
As a consequence, by the splitting lemma, $\pi_1(\CM(\CA))$ is a semidirect product of $\pi_1(\CM(\CB))$ 
acting on the fundamental group of the fiber.
In particular, this applies to each of the cases considered in  Theorem \ref{thm:main}.

(ii). One can also construct a cross section to the fibration
$f\colon Y \to Z$ used in the proof of Theorem  \ref{thm:classical} as follows.
Let
$$y_n = y_n\left(z_1, \ldots, z_{n-1}\right) = \sqrt{\left\vert z_1 \right\vert + \ldots + \left\vert z_{n-1} \right\vert}.$$
Now for all $\left(z_1, \ldots, z_{n-1}\right) \in Z$, for all $1 \leq i \leq n - 1$ the real part of $y_i^2 = y_n^2 - z_i$ is positive. Thus choosing a branch $\alpha$ of the square root, we can define
$y_i = \alpha\left(y_n^2 - z_i\right)$ continuously, yielding a cross section $s\colon Z \to Y$.
This section was initially constructed by Falk and Randell in \cite[\S 5]{falkrandell:fiber-type}
in case $\CA$ is the full reflection arrangement of type $D_n$ which is strictly linearly fibered over  
$\CB = \CJ_{n-1}$, cf.~Example \ref{ex:kpione2}.
See also \cite[\S 1.1]{LeibmanMarkushevich} for a locally trivial fibration in this case with
a slightly different section.

As $f \circ s = \id_Z$, the short exact sequence of fundamental groups splits. Thus by the splitting lemma we see that $\pi_1(Y)$ is a semidirect product of $\pi_1(Z)$ acting on $\pi_1(C_z)$, where $C_z$
is the fiber over $z \in Z$ as above.
\end{remark}

%%%%%%%%%%%%%%%%%%%%%%%%%%%%%%%%%%%%%%%%%%%%%%%%%%%%%%%%%%%%%%%%%%%%%%
%%%%%%%%%%%%% Acknowledgments
%%%%%%%%%%%%%%%%%%%%%%%%%%%%%%%%%%%%%%%%%%%%%%%%%%%%%%%%%%%%%%%%%%%%%%

\bigskip {\bf Acknowledgments}: 
We are grateful to Li Li for helpful discussions concerning his argument in
\cite{LiLi} and to Graham Denham and Michael Falk for providing 
the reference \cite[(3.12)]{falkrandell:homotopy},
where the non-$K(\pi,1)$-arrangement $\CK_3$ used in 
Example \ref{ex:nonkpione} seems to appear first in 
the literature.
We would also like to thank Daniel Cohen for raising 
the question about the existence of the 
sections of the fibrations discussed in Remark \ref{rem:fundgrpprod}.
Thanks are also due to the anonymous referee for making numerous suggestions improving the paper.

The research of this work was supported by 
DFG-grant RO 1072/16-1.

%%%%%%%%%%%%%%%%%%%%%%%%%%%%%%%%%%%%%%%%%%%%%%%%%%%%%%%%%%%%%%%%%%%%%%
%%%%%%%%%%%%% bibliography
%%%%%%%%%%%%%%%%%%%%%%%%%%%%%%%%%%%%%%%%%%%%%%%%%%%%%%%%%%%%%%%%%%%%%%

\bigskip

\bibliographystyle{amsalpha}

\newcommand{\etalchar}[1]{$^{#1}$}
\providecommand{\bysame}{\leavevmode\hbox to3em{\hrulefill}\thinspace}
\providecommand{\MR}{\relax\ifhmode\unskip\space\fi MR }
% \MRhref is called by the amsart/book/proc definition of \MR.
\providecommand{\MRhref}[2]{%
  \href{http://www.ams.org/mathscinet-getitem?mr=#1}{#2} }
\providecommand{\href}[2]{#2}

%%%%%%%%%%%%%%%%%%%%%%%%%%%%%%%%%%%%%%%%%%%%%%%%%%%%%%%%%%%%%%%%%%%%%%
%%%%%%%%%%%%%%%%%%%%%%%%%%%%%%%%%%%%%%%%%%%%%%%%%%%%%%%%%%%%%%%%%%%%%%

\end{document}